\newtheorem{theorem}{Theorem}[section]
\newtheorem{proposition}[theorem]{Proposition}
\newtheorem{definition}[theorem]{Definition}
\newtheorem{remark}[theorem]{Remark}
\newtheorem{corollary}[theorem]{Corollary}
\newtheorem{example}[theorem]{Example}
\newtheorem*{maintheorem}{Main Result}
\definecolor{Green}{rgb}{0.010,0.7,0.02}
\begin{document}

\date{\today }
\title[Embeddings for the space $LD_\gamma^{p}$]{ Embeddings for the space $%
LD_\gamma^{p}$ on sets of finite perimeter}
\author[N. Chemetov]{ Nikolai V. Chemetov}
\address{University of Lisbon, Lisbon, Portugal}
\email{nvchemetov@fc.ul.pt, nvchemetov@gmail.com}
\author[A. Mazzucato]{Anna L. Mazzucato}
\address{Penn State University, University Park, PA, 16802, U.S.A.}
\email{alm24@psu.edu}

\begin{abstract}
Given an open set with finite perimeter $\Omega\subset \mathbb{R}^n$, we
consider the space $LD_\gamma^{p}(\Omega)$, $1\leq p<\infty$, of functions
with $p$th-integrable deformation tensor on $\Omega$ and with $p$%
th-integrable trace value on the essential boundary of $\Omega$. We
establish the continuous embedding $LD_\gamma^{p}(\Omega)\subset
L^{pN/(N-1)}(\Omega)$. The space $LD_\gamma^{p}(\Omega)$ and this embedding
arise naturally in studying the motion of rigid bodies in a viscous,
incompressible fluid.
\end{abstract}

\maketitle

\section{Introduction}

\label{ms}

In this work, we establish Sobolev-type embeddings for a non-standard
function space that arise in the study of the motion of rigid bodies
in a viscous, incompressible fluid.

The problem of the motion of solid bodies in a viscous fluid filling a
bounded container, has been studied by several authors --- we cite, in
particular, Hoffmann, Starovoitov \cite{HOST}, San Mart\'{\i}n, Starovoitov,
Tucsnak \cite{SST}, Feireisl, Hillairet, Ne\v{c}asov\'{a} \cite{FHN}, Bost,
Cottet, Maitre \cite{BCM}, Gunzburger, Lee, Seregin \cite{GLS}, Takahashi 
\cite{T}, Judakov \cite{Yu}. The authors of these works considered the
no-slip condition, that is, the velocity is assumed to equal the velocity at
the surface of the rigid bodies and the velocity of the container walls,
also assumed rigid. In the simplest case in which the container is fixed, the
velocity vanishes at the walls.

However, it has been shown mathematically that this assumption leads to some
non-physical results, namely, under the no-slip condition collision between
the bodies and between the bodies and the walls cannot happen in finite time
(see Hesla \cite{HES}, Hillairet \cite{HIL}, Starovoitov \cite{STA2} among
others).

One way to include the possibility of collision that is physically motivated
is to allow slippage at the boundaries. There are several ways to allow for
a non-trivial slip at the boundary by modifying the boundary condition. The
Navier boundary condition models slip with friction and it is amenable to a
theoretical analysis. The first to study collisions under the Navier
conditions were Neustupa and Penel \cite{NP1,NP2}, who considered a
prescribed collision of a ball with a flat wall, while the free motion of a
single rigid body in the whole space $\mathbb{R}^{3}$ was investigated in 
\cite{PS111}. The local existence result, i.e., up to the time of first
collision, for motion of a solid in the presence of walls and slip was
recently obtained by G\'{e}rard-Varet and Hillairet \cite{GH2}. In \cite{GHC}, 
it was shown that, when Navier boundary conditions are imposed on both the
solid and walls, collision of the solid body with the boundary indeed can
happen in finite time. The work \cite{cnn} contains a global existence
result for weak solutions when the Navier friction condition is imposed at the surface of the
body and the no-slip condition is imposed at the container walls.

When bodies collide, the fluid domain, which coincide with the portion of
the container that is exterior to the bodies can have low regularity,
typically at the level of cusps, especially if the solid bodies have smooth
boundary (see Figure \ref{fig2}). In this situation, both Poincar\'e and
Korn's inequalities do not hold in general, so no standard embedding results
are available.

However, in studying existence of weak solution for the fluid-interaction
problem past collision, one is confronted with the integrability of
functions that have only bounded deformation tensor in $L^2$. Our main
result is a Sobolev-type embedding result for functions with this level of
regularity in cusp domains, and even rougher sets, more precisely, sets of
finite perimeter, if in addition some information is available on the trace
of the function at the boundary.

\begin{figure}[tbp]
\subfigure[Body touching the container walls.]{
\scalebox{0.1}{\includegraphics{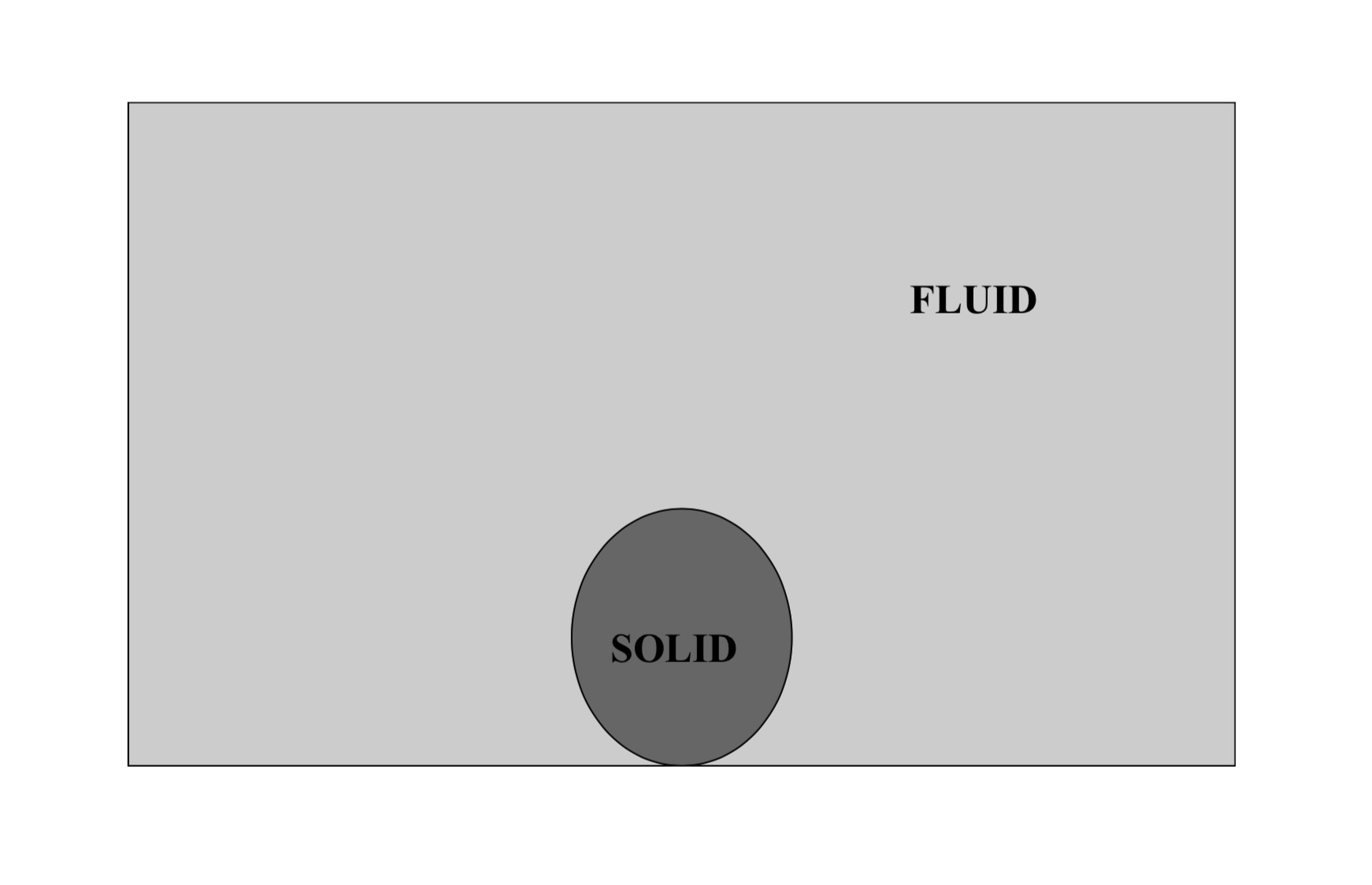}}
\label{fig2.a}} \vspace*{0.2in} \hspace{\subfigtopskip}\hspace{%
\subfigbottomskip} 
\subfigure[Cuspidal subregion generated after
touching.]{
\scalebox{0.12}{\includegraphics{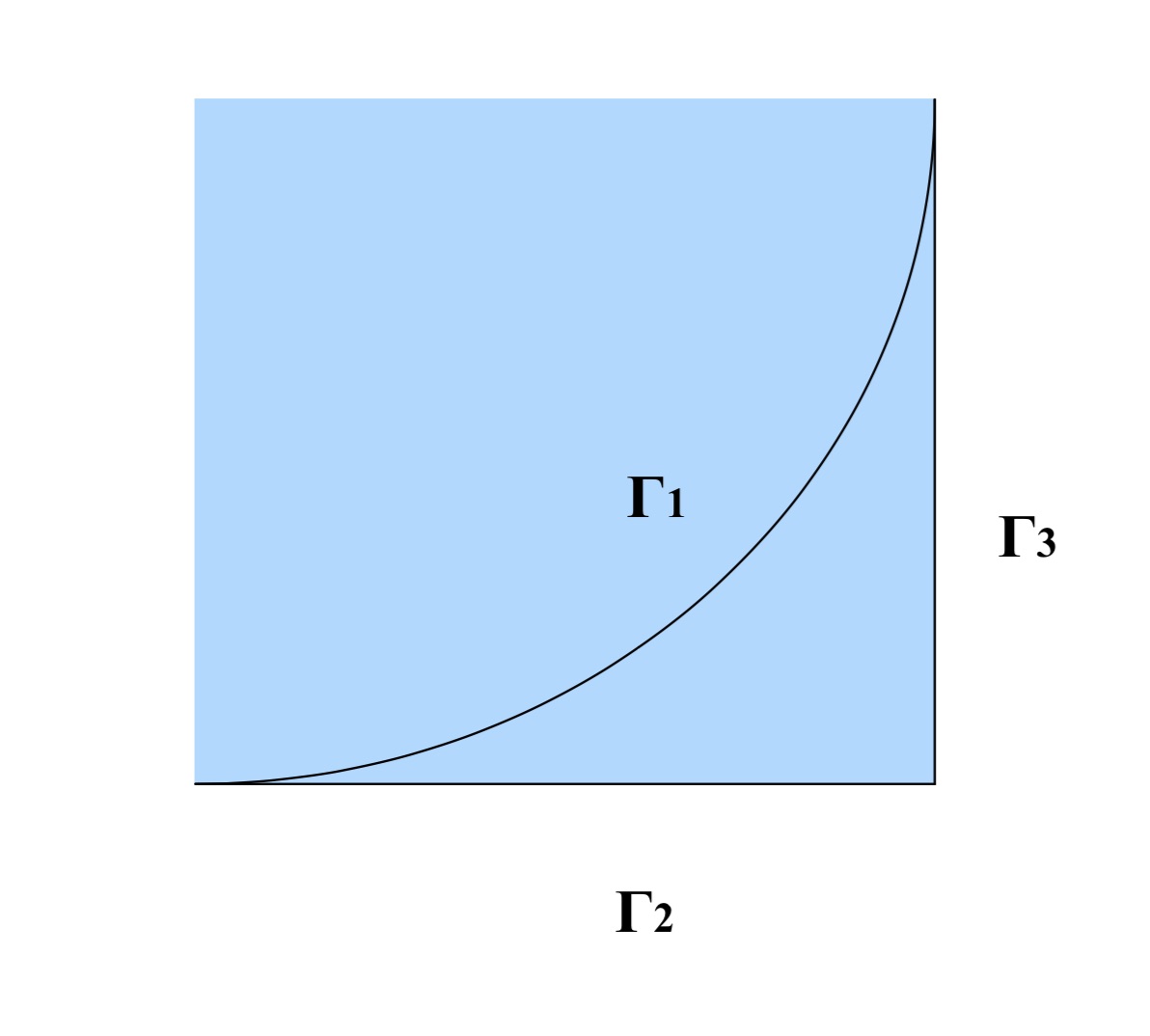}}
\label{fig2.b}} 
\caption{The fluid geometry in the presence of collisions.}
\label{fig2}
\end{figure}



We begin by recalling the notion of bounded deformation. Let $\Omega \subset 
\mathbb{R}^{N}$\ be a bounded open set. In applications, $N=2, 3$. We
consider a vector-function $\ \mathbf{v:x}\in \Omega \rightarrow \mathbb{R}%
^{N}$, and define the tensor of deformation $\mathbb{D}\mathbf{v}=\frac{1}{2}%
(\nabla \mathbf{v}+\left( \nabla \mathbf{v}\right) ^{T})$ with components 
\begin{equation*}
d{_{ij}(}\mathbf{v})=\frac{1}{2}\left( \frac{\partial v_{i}}{\partial x_{j}}+%
\frac{\partial v_{j}}{\partial x_{i}}\right) ,\qquad i,j=1,...N.
\end{equation*}

\begin{definition}
\label{def2.5} Given $p\geq 1$, we define the space of functions with $L^{p}$%
-bounded deformation as 
\begin{equation*}
LD^{p}(\Omega )=\left\{ \mathbf{v}\in L^{p}(\Omega ):\,\mathbb{D}\mathbf{v}%
\in L^{p}(\Omega )\right\} ,
\end{equation*}%
endowed by the norm $\ ||\mathbf{v}||_{LD^{p}(\Omega )}=||\mathbf{v}%
||_{L^{p}(\Omega )}+||\mathbb{D}\mathbf{v}||_{L^{p}(\Omega )}.$
\end{definition}

\bigskip

Our main motivation is the validation of the convective term in the Navier -
Stokes equations for the fluid-structure interaction problem in the presence
of collisions 
\begin{equation}  \label{eq:convective}
\int_{\Omega }(\mathbf{u}\otimes \mathbf{u}):\mathbb{D}\boldsymbol{\psi }\,d%
\mathbf{x\qquad }\text{for a test function\quad\ }\boldsymbol{\psi }\in
LD^{2}(\Omega )
\end{equation}%
in cuspidal domains $\Omega $ \ (see the definition 2.1 in \cite{cnn}). The
convective term is then well defined as a distribution, if the solution $%
\mathbf{u}\in LD^{2}(\Omega )$ belongs at least to $L^{4}(\Omega )$.

We briefly review existing embedding results for domains with cusps. There
are well known embedding results involving the Sobolev space $%
W_{2}^{1}(\Omega )$ when $\Omega $ has cusps (see \cite{ad}, \cite{Gr}, \cite{MP}). \ However, the methods used in these works can not be applied in the
case of $LD^{2}(\Omega )$ if one wants a bound on the norm in $LD^{2}(\Omega
)$. \ The optimal embedding theorem $\,W_{2}^{1}(V(x^{\alpha
}))\hookrightarrow L^{r}(V(x^{\alpha }))\,$ for $r\in \lbrack 1,\frac{%
2(\alpha +1)}{\alpha -1}]\,$ in the cuspidal domain \ 
\begin{equation*}
V(x^{\alpha })=\left\{ \mathbf{x}=(x,y):~0<x<1,\quad 0<y<x^{\alpha }\right\}
\subset \mathbb{R}^{2}
\end{equation*}%
was obtained in \cite{MM1}. The embedding result $\,W_{2}^{1}(V(x^{\alpha
}))\hookrightarrow L^{q}(\partial V(x^{\alpha })),\,$ with $\,\,1\leqslant
q\leqslant 2\,$ for optimal values of $\alpha :\,\,\alpha <1+\frac{2}{q},\,$
was proved in \cite{Acosta2}. For a more complete description of optimal
embedding results in cuspidal domains, we refer to \cite{Besov}, \cite{Kil}, 
\cite{Lab}, \cite{MP} and \cite{W}.

We will show next with an example that knowing $\mathbf{u}\in LD^2(\Omega)$
is \emph{not} enough to guarantee that the convective term in Equation %
\eqref{eq:convective} is well defined and additional hypotheses are needed.

\begin{example}
\label{ex1} We consider the cusp domain $V(x^{2})$. This type of cusp
appears at the moment in which a solid disk collides with a flat walls in
two space dimensions. We take the vector function 
\begin{equation}
\mathbf{w}_s=((s-1)yx^{-s},x^{1-s})  \label{ex_w}
\end{equation}%
with $s$ a real parameter to be chosen later on. One can compute the
associated deformation tensor 
\begin{equation*}
{\mathbb{D}}\mathbf{w}_s=\left[ 
\begin{array}{cc}
-s(s-1)yx^{-s-1} & 0 \\ 
0 & 0%
\end{array}%
\right]
\end{equation*}%
as in \cite{Acosta3}, p. 219-221, from which it follows that 
\begin{equation*}
||{\mathbb{D}}\mathbf{w}_s||_{L^{2}(V(x^{2}))}^{2} \leq
C\int_{0}^{1}x^{6-2(s+1)}dx,
\end{equation*}
for some positive constant $C$. Similarly, given $q\geq 1$, we calculate 
\begin{eqnarray*}
||\mathbf{w}_s||_{L^{q}(V(x^{2}))}^{q} &\simeq &C\int_{0}^{1}\left(
\int_{0}^{x^{2}}(y^{q}x^{-qs}+x^{-q(s-1)}) \, dy\right) \, dx \simeq
C\int_{0}^{1}x^{-q(s-1)+2} \, dx.
\end{eqnarray*}%
Consequently, 
\begin{equation*}
\mathbf{w}_s\in LD^{2}(V(x^{2}))\qquad \text{for any}\quad s<1+\frac{3}{2}.
\end{equation*}%
Taking $q=2+\varepsilon $, $\varepsilon>0$, we conclude at the same time
that 
\begin{equation*}
\mathbf{w}_s\notin L^{2+\varepsilon }(V(x^{2}))\qquad \text{for}\quad s=1+%
\frac{3}{2+\varepsilon }.
\end{equation*}%
In this case, in particular, we cannot make sense of the convective term in
Equation \eqref{eq:convective}.
\end{example}

For the applications we have in mind, additional information is available on
the integrability of the trace of the function at the boundary. The theory
of sets of finite perimeter, which covers most cusp domains, provides a
suitable framework for defining the trace on rough (non-Lipschitz)
boundaries. 

We informally define the space $LD_\gamma^{p}(\Omega)$, $1\leq p<\infty$, of
functions with $p$th-integrable deformation tensor on $\Omega$ and with $p$%
th-integrable trace value on the essential boundary of $\Omega$ (see
Definition \ref{LD_2_b}). Then, our main result is the following embedding
result.

\begin{maintheorem}
Let $\Omega \subset \mathbb{R}^N$ be a bounded open set with finite
perimeter. Then, there is a continuous embedding 
\begin{equation*}
LD_{\gamma }^{p}(\Omega ) \hookrightarrow L^{\frac{pN}{(N-1)}}(\Omega ).
\end{equation*}
\end{maintheorem}

In the case $p=2$, $N=2$, we therefore have that $\mathbf{u} \in L^4(\Omega
) $, as required to define the convective term of Equation %
\eqref{eq:convective}.

The paper is organized as follows. In Section \ref{sec:prelim}, we recall
principal facts about sets of finite perimeters and functions of bounded
deformation. We also discuss a few preliminary results needed in the proof of our
main theorem, which is presented in Section \ref{sec:main}. Throughout, we
use standard notation for classical spaces, such as the Sobolev spaces $%
W^{k,p}(\Omega )$.

\subsection*{Acknowledgments}

The second author was partially supported by the US National Science
Foundation grant DMS-1615457. The second author acknowledges the hospitality
of the Department of Mathematics at the University of Lisbon, where part of
this work was conducted.

\section{Sets of finite perimeters and bounded deformation}

\label{sec:prelim}

In this section, we briefly recall the needed results on sets of finite
perimeter, and states key properties of functions of bounded deformation on
such sets. We will use the notation and results in \cite{Ambr}, \cite{Leoni}%
, \cite{Volpert} and \cite{Volpert2}.

We denote the $d$-dimensional Lebesgue measure by $\mathcal{L}^{d}$ and the
Hausdorff measure for $d$-dimensional sets in $\mathbb{R}^{N}$, $N\geq d$,
as $\mathcal{H}^{d} $. We also denote the space of functions of bounded
variation in $\mathbb{R}^N$ by $BV(\mathbb{R}^{N})$.

\begin{definition}
\label{def3} Let $E\subset \mathbb{R}^{N}$ be a bounded $\mathcal{L}^{N}-$%
measurable set. We denote the characteristic function of the set $E$ by $%
\chi _{E}$. \ If $\chi _{E}\in BV(\mathbb{R}^{N}),$ then $E$ is called a set
with finite perimeter. The finite positive number 
\begin{equation*}
P(E)=|\nabla \chi _{E}|(\mathbb{R}^{N})=\sup_{\mathbf{\phi }}\left\{
\int_{E} div \, \mathbf{\phi }\,d\mathbf{x}:\quad |\mathbf{\phi }|\leq
1,\quad \mathbf{\phi }\in C_{c}^{1}(\mathbb{R}^{N})\right\}
\end{equation*}
is called the perimeter of the set $E$.
\end{definition}

In the definition above, we have used that, if $\chi _{E}\in BV(\mathbb{R}%
^{N}),$ then the generalized gradient $\ \nabla \chi _{E}=(\mu _{1},...,\mu
_{N})=\mathbf{\mu }$\ \ is a vector with components given by bounded Radon
measures $\mu _{i},$ $\ i=1,...N,$ satisfying 
\begin{equation*}
\int_{E} div \, \mathbf{\phi }\,d\mathbf{x}=-\int_{\mathbb{R}^{N} }(\mathbf{%
\phi },d\mathbf{\mu )},\qquad \text{for any }\mathbf{\phi }=(\phi
_{1},...,\phi _{N})\in C_{c}^{1}(\mathbb{R}^{N}).
\end{equation*}

The following results are discussed in \cite[pages 154-156]{Volpert} and in 
\cite[page 159, Proposition 3.62]{Ambr}, for instance.

\begin{proposition}
\label{Prop} The following holds

\begin{enumerate}[label=(\arabic*)]

\item The set of all sets with finite perimeter forms algebra, that is, if $%
E, F$ have finite perimeter then the sets $\mathbb{R}^{N}\backslash E, \,
E\cup F,$ $E\cap F$ also have finite perimeter.

\item If $E$ is a Lipschitz domain, then $E$ is a set with finite perimeter
and $P(E)=\mathcal{H}^{N-1}(\partial E).$
\end{enumerate}
\end{proposition}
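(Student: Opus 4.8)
The plan is to deduce both parts from standard structural properties of $BV$ functions together with the Gauss--Green formula on Lipschitz domains, as collected in \cite{Ambr,Volpert,Leoni}.

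For part (1), I would reduce each closure property to the corresponding operation on characteristic functions. The complement is immediate: since $\chi_{\RR^{N}\setminus E}=1-\chi_{E}$, its distributional gradient equals $-\nabla\chi_{E}$, a finite Radon measure of the same total mass, so $P(\RR^{N}\setminus E)=P(E)$, interpreting finite perimeter through finiteness of $|\nabla\chi_{E}|(\RR^{N})$ (the constant $1$ is not integrable at infinity, so one passes to the notion of locally finite perimeter for the unbounded complement). For intersection and union I would use that $\chi_{E\cap F}=\chi_{E}\wedge\chi_{F}$ and $\chi_{E\cup F}=\chi_{E}\vee\chi_{F}$, together with the lattice (submodularity) inequality for $BV$ functions,
\begin{equation*}
|\nabla(\chi_{E}\vee\chi_{F})|(\RR^{N})+|\nabla(\chi_{E}\wedge\chi_{F})|(\RR^{N})\le |\nabla\chi_{E}|(\RR^{N})+|\nabla\chi_{F}|(\RR^{N}),
\end{equation*}
which forces both terms on the left to be finite once the right-hand side is. This yields finite perimeter for $E\cap F$ and $E\cup F$ simultaneously; equivalently, one may obtain the union from the intersection and the complement via De Morgan's laws.

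For part (2), suppose $E$ is a bounded Lipschitz domain. By Rademacher's theorem the outward unit normal $\bnu$ exists $\mathcal{H}^{N-1}$-a.e.\ on $\partial E$, and the Gauss--Green (divergence) theorem holds in the form $\int_{E}\dive\,\boldsymbol{\phi}\,d\bx=\int_{\partial E}\boldsymbol{\phi}\cdot\bnu\,d\mathcal{H}^{N-1}$ for every $\boldsymbol{\phi}\in C_{c}^{1}(\RR^{N})$. If $|\boldsymbol{\phi}|\le 1$, then the right-hand side is bounded by $\mathcal{H}^{N-1}(\partial E)$; taking the supremum over all such $\boldsymbol{\phi}$ shows $\chi_{E}\in BV(\RR^{N})$ and $P(E)\le \mathcal{H}^{N-1}(\partial E)$.

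The main obstacle is the reverse inequality $P(E)\ge \mathcal{H}^{N-1}(\partial E)$, i.e.\ showing that the supremum defining $P(E)$ is attained in the limit by test fields aligned with $\bnu$. Here I would exploit the Lipschitz structure: cover $\partial E$ by finitely many charts in which the boundary is the graph of a Lipschitz function, flatten each chart, and use a subordinate partition of unity to construct fields $\boldsymbol{\phi}_{k}\in C_{c}^{1}(\RR^{N})$ with $|\boldsymbol{\phi}_{k}|\le 1$ converging to $\bnu$ in $L^{1}(\partial E;\mathcal{H}^{N-1})$, mollifying the a.e.-defined normal and invoking Lusin's theorem to control the non-smooth set. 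Passing to the limit in the Gauss--Green identity then gives $\int_{\partial E}|\bnu|^{2}\,d\mathcal{H}^{N-1}=\mathcal{H}^{N-1}(\partial E)\le P(E)$, closing the equality. The delicate point throughout is that the normal is only defined almost everywhere, so every approximation must be carried out measure-theoretically rather than pointwise; once this is handled, both statements follow from the cited $BV$ theory.
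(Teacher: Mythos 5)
The paper does not actually prove this proposition: it is stated as a known fact with pointers to \cite[pages 154--156]{Volpert} and \cite[Proposition 3.62]{Ambr}, so there is no internal argument to compare against. Your sketch is a correct reconstruction of the standard proofs found in those references, and the two key tools you invoke --- the submodularity inequality for the total variation under $\vee$ and $\wedge$, and the Gauss--Green formula on Lipschitz domains combined with an approximation of $\bnu$ by admissible test fields --- are exactly the right ones. Three points are worth tightening. First, under the paper's own Definition \ref{def3} a set of finite perimeter is required to be \emph{bounded}, so the claim about $\RR^{N}\setminus E$ only makes sense if one reads ``finite perimeter'' through the supremum $\sup_{|\boldsymbol{\phi}|\le 1}\int \dive\,\boldsymbol{\phi}\,d\bx$ (or locally finite perimeter), as you correctly flag; the identity $\int_{\RR^{N}\setminus E}\dive\,\boldsymbol{\phi}\,d\bx=-\int_{E}\dive\,\boldsymbol{\phi}\,d\bx$ for compactly supported $\boldsymbol{\phi}$ then gives $P(\RR^{N}\setminus E)=P(E)$ directly, without needing $1-\chi_{E}\in BV$. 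Second, the submodularity inequality is itself not free: it is proved by smoothing ($u_{\varepsilon}=\chi_{E}\ast\varphi_{\varepsilon}$, $v_{\varepsilon}=\chi_{F}\ast\varphi_{\varepsilon}$), checking the pointwise inequality $|\nabla(u\vee v)|+|\nabla(u\wedge v)|\le|\nabla u|+|\nabla v|$ a.e.\ for Lipschitz functions, and passing to the limit by lower semicontinuity of the total variation; a one-line citation would otherwise be circular in a self-contained write-up. Third, in part (2) you should state explicitly that $\mathcal{H}^{N-1}(\partial E)<\infty$ for a bounded Lipschitz domain (via the finitely many Lipschitz graph charts and the area formula), since both inequalities are vacuous without it; with that in place your truncated-mollification construction of fields $\boldsymbol{\phi}_{k}$ with $|\boldsymbol{\phi}_{k}|\le 1$ and $\boldsymbol{\phi}_{k}\to\bnu$ in $L^{1}(\partial E;\mathcal{H}^{N-1})$ closes the reverse inequality by dominated convergence. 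With these small additions the argument is complete and matches the cited literature.
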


\begin{remark}
If we consider the motion of finitely-many rigid bodies in a solid
container, all with Lipschitz boundaries, an hypothesis which covers
situation of physical and computational interest, then the domain occupied
by the fluid will be a set of finite perimeter at all times by Proposition %
\ref{Prop}. (See Figure \ref{figure_anna}.)
\end{remark}

\begin{figure}[h]
\scalebox{0.4}{\includegraphics{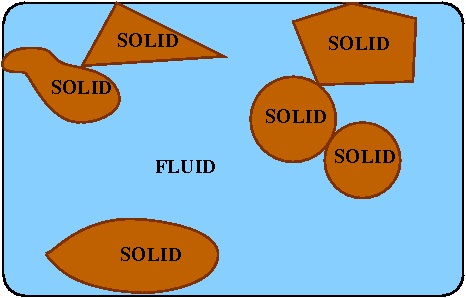}}
\caption{Solid bodies immersed in a fluid in a rigid container.}
\label{figure_anna}
\end{figure}

\bigskip

We introduce the concept of \emph{essential boundary} for a measurable set
(see e.g. \cite[pages 256, 258]{Volpert}, \cite[page 158]{AMT}). We shall
denote the ball of radius $\rho >0$ and center $\mathbf{x}\in \mathbb{R}^{N}$
by $B_{\rho }(\mathbf{x})$, and its volume by $\omega _{N}(\rho )=\mathcal{L}%
^{N}(B_{\rho }(\mathbf{x}))$. We shall also define the unit sphere 
\begin{equation*}
\mathbb{S}^{N-1}=\{\mathbf{a}\in \mathbb{R}^{N}:\;\;|\mathbf{a}|=1\}
\end{equation*}%
and the hyperplane 
\begin{equation*}
P_{\mathbf{a}}=\{\mathbf{y}\in \mathbb{R}^{N}:\mathbf{y}\cdot \mathbf{a}=0\}
\end{equation*}%
through the origin with normal vector $\mathbf{a}$ in $\mathbb{S}^{N-1}.$ We
shall also need the line $l_{\mathbf{a}}(\mathbf{x})$ with direction vector $%
\mathbf{a}\in \mathbb{S}^{N-1}$ through $\mathbf{x}\in P_{\mathbf{a}}.$

\begin{definition}
\label{def5} Let $E$ be a given measurable subset of $\mathbb{R}^{N}.$ A
point $\mathbf{x\in } E$ is point of density (respectively rarefaction) of
the set $E$ if 
\begin{equation*}
\lim_{\rho \rightarrow 0^{+}}\frac{\mathcal{L}^{N}(E\cap B_{\rho }(\mathbf{x}%
))}{\omega _{N}(\rho )}=1 \quad (\text{respectively} = 0).
\end{equation*}
We denote by $E_{\ast }$ the set of all points of density of $E$ and by $%
E^{\ast}$ the complement of the set of points of rarefaction of $E.$ The set
\ $\partial ^{\ast }E=E^{\ast }\backslash E_{\ast }$ is then called the
essential boundary of $E$.
\end{definition}

\bigskip

We next recall some facts about the essential boundary for sets with finite
perimeter. \ For more details we refer the reader to \cite{AMT}, \cite{EG}, 
\cite{Fe}, \cite{Gi}, \cite{Volpert2}, and \cite{Zi}.

\begin{proposition}
\label{Prop2} Let $E$ be a set with finite perimeter and let $\partial
^{\ast }E$ be its essential boundary. Then

\begin{enumerate}[label=(\roman*), ref=(\roman*)]

\item The boundary $\partial ^{\ast }E$ is countably $\mathcal{H}^{N-1}$%
-rectifiable, that is, 
\begin{equation*}
\partial ^{\ast }E=\cup _{n=1}^{\infty }K_{n}\cup S,
\end{equation*}%
where $\mathcal{H}^{N-1}(S)=0$, the sets $K_n$ are pairwise disjoint, and
each $K_n$ is a compact subset of a $C^{1}$ hypersurface in $\mathbb{R}^{N}$ 
\begin{equation*}
K_{n}=\Phi _{n}(A_{n})
\end{equation*}
for a compact subset $A_{n}\subset \mathbb{R}^{N-1}$ and a $C^1$ map $\Phi_n
: \mathbb{R}^{N-1} \rightarrow \mathbb{R}^{N}$ \cite[page 205]{EG}.

\item the unit normal $\bm{\nu }=\bm{\nu }(\mathbf{x})$ exists\ for $%
\mathcal{H}^{N-1}-$a.a. points $\mathbf{x}\in \partial ^{\ast }E$ \, \cite[%
page 205]{EG}, \cite[pages 227-228]{Volpert2}, \cite[pages 154, 158]{AMT}.

\item \label{Prop2.item3} Let $\mathbf{a}\in \mathbb{S}^{N-1}$. For $%
\mathcal{L}^{N-1}$-a.a. $\mathbf{x}\in P_{\mathbf{a}}$, the set $l_{\mathbf{a%
}}(\mathbf{x})\cap E_{\ast }$ is the union of a finite number of open
intervals with disjoint closures, and the union of the boundary points of
the intervals\ coincides with the set $l_{\mathbf{a}}(\mathbf{x})\cap
\partial ^{\ast }E$ \, \cite[page 233]{Volpert2}.
\end{enumerate}
\end{proposition}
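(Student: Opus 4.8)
The three assertions are the classical structure theorems of De Giorgi and Federer for sets of finite perimeter, together with the one-dimensional sectioning theorem for $BV$ functions, so my plan is to reconstruct the standard blow-up and slicing arguments. For (i) and (ii) the plan is to work first with the \emph{reduced boundary} rather than the essential boundary. Writing $\mu = \nabla \chi_E$ for the vector Radon measure of Definition \ref{def3}, I would let $\partial^{-}E$ be the set of points $\mathbf{x}$ at which the Radon--Nikodym derivative $\bm{\nu}(\mathbf{x}) = \frac{d\mu}{d|\mu|}(\mathbf{x})$ exists with $|\bm{\nu}(\mathbf{x})| = 1$; by the Besicovitch differentiation theorem this holds for $|\mu|$-a.a. $\mathbf{x}$, so $|\mu|(\mathbb{R}^{N}\setminus \partial^{-}E) = 0$. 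The heart of the matter is the De Giorgi blow-up: at each $\mathbf{x}\in \partial^{-}E$ I would rescale $E$ by $\mathbf{y}\mapsto (\mathbf{y}-\mathbf{x})/\rho$ and show, using the relative isoperimetric inequality together with a monotonicity estimate for $|\mu|(B_{\rho}(\mathbf{x}))/\rho^{N-1}$, that the rescaled sets converge in $L^{1}_{\mathrm{loc}}$ to the half-space $\{\mathbf{y}:\mathbf{y}\cdot \bm{\nu}(\mathbf{x})\le 0\}$. This single limit simultaneously exhibits $\bm{\nu}(\mathbf{x})$ as a genuine measure-theoretic outer normal, which is (ii), yields the density identity $|\mu| = \mathcal{H}^{N-1}\llcorner \partial^{-}E$, and, through a Besicovitch covering argument, shows that $\partial^{-}E$ is countably $\mathcal{H}^{N-1}$-rectifiable.

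To pass from the reduced boundary to the essential boundary $\partial^{*}E = E^{*}\setminus E_{*}$ of Definition \ref{def5}, I would invoke Federer's theorem. The half-space blow-up places every reduced-boundary point at density $\tfrac12$, so $\partial^{-}E\subseteq \partial^{*}E$; conversely, a density argument shows that $\mathcal{H}^{N-1}$-almost every point of $\mathbb{R}^{N}$ has density $0$, density $1$, or lies in $\partial^{-}E$, and since points of $\partial^{*}E$ have density neither $0$ nor $1$ this forces $\mathcal{H}^{N-1}(\partial^{*}E\setminus \partial^{-}E) = 0$. Hence rectifiability and a.e.\ existence of the normal transfer verbatim from $\partial^{-}E$ to $\partial^{*}E$. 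To produce the specific decomposition $\partial^{*}E = \bigcup_{n}K_{n}\cup S$ with the $K_{n}$ pairwise disjoint, $C^{1}$, and compact, I would upgrade Lipschitz rectifiability to $C^{1}$ rectifiability by the Lusin--Whitney approximation (each Lipschitz graph coincides with a $C^{1}$ graph off an arbitrarily small set), cover $\partial^{-}E$ up to an $\mathcal{H}^{N-1}$-null set by countably many such $C^{1}$ graphs $\Phi_{n}(A_{n})$, disjointify them by successively subtracting the earlier pieces, restrict the $A_{n}$ to compact exhaustions, and collect the residual null set together with $\partial^{*}E\setminus \partial^{-}E$ into $S$.

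For the sectioning statement (iii) I would fix $\mathbf{a}\in \mathbb{S}^{N-1}$ and analyze the one-dimensional slices $t\mapsto \chi_{E}(\mathbf{x}+t\mathbf{a})$, $\mathbf{x}\in P_{\mathbf{a}}$. The key input is the $BV$ sectioning theorem: since $\chi_{E}\in BV(\mathbb{R}^{N})$, for $\mathcal{L}^{N-1}$-a.a.\ $\mathbf{x}$ the slice is a one-dimensional $BV$ function taking only the values $0$ and $1$, and the Fubini-type identity $\int_{P_{\mathbf{a}}}\big(\text{pointwise variation of the slice}\big)\,d\mathcal{L}^{N-1} = |\langle \mu,\mathbf{a}\rangle|(\mathbb{R}^{N})\le |\mu|(\mathbb{R}^{N}) < \infty$ forces the slice to have finitely many jumps for a.a.\ $\mathbf{x}$. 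A $\{0,1\}$-valued $BV$ function on a line with finitely many jumps is the characteristic function of a finite union of intervals; passing to the precise representative given by the density-one set $E_{*}$ makes these intervals open with disjoint closures and pins their endpoints, and the jump set of the slice then coincides with $l_{\mathbf{a}}(\mathbf{x})\cap \partial^{*}E$, as claimed.

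The main obstacle throughout is the De Giorgi blow-up underlying (i) and (ii): the monotonicity and relative isoperimetric estimates that force the rescaled sets to converge to a half-space, and the covering argument that converts this limit into both the density identity $|\mu|=\mathcal{H}^{N-1}\llcorner \partial^{-}E$ and the rectifiability, are the genuinely hard steps. By contrast, Federer's comparison of the reduced and essential boundaries and the one-dimensional classification in (iii) are comparatively soft once the blow-up analysis is in place.
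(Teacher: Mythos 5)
The paper contains no proof of Proposition \ref{Prop2}: all three items are imported from the literature, (i) and (ii) from \cite[page 205]{EG} (with \cite{Volpert2}, \cite{AMT} for the normal) and (iii) from \cite[page 233]{Volpert2}, so there is no in-paper argument to compare yours against. That said, your outline is a faithful reconstruction of the proofs in exactly those sources: De Giorgi's blow-up at reduced-boundary points, Federer's theorem $\mathcal{H}^{N-1}(\partial ^{\ast }E\setminus \partial ^{-}E)=0$ to transfer everything to the essential boundary, Whitney-type $C^{1}$ approximation plus disjointification for the decomposition in (i), and the $BV$ slicing identity for (iii). Two points deserve sharpening. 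First, in the blow-up you invoke a ``monotonicity estimate''; what is actually used in \cite{EG} are two-sided density bounds $0<c\leq |\nabla \chi _{E}|(B_{\rho }(\mathbf{x}))/\rho ^{N-1}\leq C$ coming from the relative isoperimetric inequality --- no genuine monotonicity formula is needed. Second, and more substantively, in (iii) your phrase ``passing to the precise representative given by the density-one set $E_{\ast }$'' conceals the real content of Vol'pert's theorem: it is not automatic that slicing commutes with taking measure-theoretic interiors, i.e.\ that for $\mathcal{L}^{N-1}$-a.a.\ $\mathbf{x}\in P_{\mathbf{a}}$ the section $l_{\mathbf{a}}(\mathbf{x})\cap E_{\ast }$ coincides with the one-dimensional density-one set of the sliced function $t\mapsto \chi _{E}(\mathbf{x}+t\mathbf{a})$, nor that the jump set of the slice is exactly $l_{\mathbf{a}}(\mathbf{x})\cap \partial ^{\ast }E$ rather than merely contained in it; the finite-jump count alone does not identify \emph{which} points of the line are endpoints of the intervals. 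This commutation is precisely what is proved on the cited pages of \cite{Volpert2}, and it is the form of the statement the paper actually relies on later (in Corollary \ref{cor} and in Step 1 of the proof of Theorem \ref{t:maintheorem}), so it cannot be waved through. With those two steps supplied, your plan is the standard and correct one.
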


In what follows, we denote by $BD(\Omega )$ the space of functions with
bounded deformation on $\Omega$ in analogy with $BV(\Omega)$. For $p\geq 1$
the space $LD^{p}(\Omega )$ is a subspace $BD(\Omega )$. Hence, we can apply
the result of \cite{Temam,RT1}, showing that the trace of functions
in $LD^{p}(\Omega )$ is well defined. (The same result was also described
carefully in \cite[Proposition 4.1]{bab}.)

\begin{proposition}
\label{approx_trace} Let $\Omega $ be a set with finite perimeter and let $%
\partial ^{\ast }\Omega $ be its essential boundary. \ Let $\mathbf{u}(%
\mathbf{x})\in LD^{p}(\Omega )$. Then for $\mathcal{H}^{N-1}-$a.e. $\mathbf{x%
}\in \partial ^{\ast }\Omega $, there exist a vector $\gamma \mathbf{u}(%
\mathbf{x})\in \mathbb{R}^{N}$, such that%
\begin{equation}
\lim_{\rho \rightarrow 0^{+}}\frac{2}{\omega _{N}(\rho )}\int_{B_{\rho} (%
\mathbf{x},\bm{\nu }) \cap \Omega}|\mathbf{u}(\mathbf{y})-\gamma \mathbf{u}(%
\mathbf{x})|\,d\mathbf{y}=0  \label{apr}
\end{equation}%
where $\bm{\nu }=\bm{\nu }(\mathbf{x})\in \mathbb{S}^{N-1}$ is the internal
normal at $\mathbf{x}\in \partial ^{\ast }\Omega $ \ and the half ball $%
B_{\rho }(\mathbf{x},\bm{\nu })$ is defined as%
\begin{equation}
B_{\rho }(\mathbf{x},\bm{\nu })=\{\mathbf{y}\in \mathbb{R}^{N}:\quad |%
\mathbf{y}-\mathbf{x}|<\rho ,\quad (\mathbf{y}-\mathbf{x})\cdot \bm{\nu }%
\,>0\}.  \label{b}
\end{equation}
\end{proposition}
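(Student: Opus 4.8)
The plan is to read \eqref{apr} as the statement that $\mathbf{u}$ possesses an interior one-sided approximate limit at $\mathcal{H}^{N-1}$-almost every point of $\partial^{\ast}\Omega$, and to produce this limit by one-dimensional slicing combined with the blow-up structure of the finite-perimeter set $\Omega$. Since $\mathbf{u}\in LD^{p}(\Omega)\subset BD(\Omega)$, the deformation $\mathbb{D}\mathbf{u}$ is represented by an $L^{p}(\Omega)$ tensor. First I would fix a direction $\mathbf{a}\in\mathbb{S}^{N-1}$ and, for $\mathbf{x}\in P_{\mathbf{a}}$, study the scalar slice $g_{\mathbf{x}}(t)=\mathbf{u}(\mathbf{x}+t\mathbf{a})\cdot\mathbf{a}$ along $l_{\mathbf{a}}(\mathbf{x})$. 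A direct computation shows that its distributional derivative equals $(\mathbb{D}\mathbf{u}\,\mathbf{a})\cdot\mathbf{a}$ restricted to the line, the antisymmetric part of $\nabla\mathbf{u}$ dropping out against the symmetric tensor $a_{i}a_{j}$; by Fubini this is an $L^{p}$ function along $\mathcal{L}^{N-1}$-a.e.\ line. Hence $g_{\mathbf{x}}$ agrees a.e.\ with an absolutely continuous function on each open interval composing $l_{\mathbf{a}}(\mathbf{x})\cap\Omega_{\ast}$ and extends continuously to the endpoints of those intervals. By Proposition~\ref{Prop2}, item~\ref{Prop2.item3}, these endpoints are exactly $l_{\mathbf{a}}(\mathbf{x})\cap\partial^{\ast}\Omega$, so this construction assigns, at $\mathcal{H}^{N-1}$-a.e.\ boundary point, the interior one-sided limit of the component $\mathbf{u}\cdot\mathbf{a}$.

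I would then run this over a countable dense family of directions $\mathbf{a}$, discarding the corresponding $\mathcal{H}^{N-1}$-null exceptional sets, to obtain at $\mathcal{H}^{N-1}$-a.e.\ $\mathbf{x}\in\partial^{\ast}\Omega$ a prescribed interior limit for every directional component. To assemble these into a single vector $\gamma\mathbf{u}(\mathbf{x})\in\mathbb{R}^{N}$ and to pass from convergence along lines to the averaged half-ball statement \eqref{apr}, I would invoke the De Giorgi blow-up theorem for sets of finite perimeter (see \cite{EG}, \cite{Ambr}): at each point where the normal $\bnu$ exists (Proposition~\ref{Prop2}, item~(ii)), the rescalings $\rho^{-1}(\Omega-\mathbf{x})$ converge in $L^{1}_{\mathrm{loc}}$ to the half-space $\{\mathbf{y}\cdot\bnu>0\}$, so that $\mathcal{L}^{N}(B_{\rho}(\mathbf{x},\bnu)\cap\Omega)\sim\tfrac12\omega_{N}(\rho)$; this is precisely what turns the normalization $2/\omega_{N}(\rho)$ in \eqref{apr} into an average over the half-ball $B_{\rho}(\mathbf{x},\bnu)$.

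The main obstacle is the recovery of the full vector trace, since slicing controls only the scalar combinations $\mathbf{u}\cdot\mathbf{a}$ and not $\mathbf{u}$ itself. Here I would apply Korn's inequality on the half-ball $B_{\rho}(\mathbf{x},\bnu)\cap\Omega$ to produce, for each small $\rho$, a rigid motion $R_{\rho}$ with $\tfrac{2}{\omega_{N}(\rho)}\int_{B_{\rho}(\mathbf{x},\bnu)\cap\Omega}|\mathbf{u}-R_{\rho}|\,d\mathbf{y}\leq C\rho\,\tfrac{2}{\omega_{N}(\rho)}\int_{B_{\rho}(\mathbf{x},\bnu)\cap\Omega}|\mathbb{D}\mathbf{u}|\,d\mathbf{y}$, the right-hand side vanishing as $\rho\to0^{+}$ at Lebesgue points of $\mathbb{D}\mathbf{u}$; the task is then to show that the linear (antisymmetric) part of $R_{\rho}$ dies in the limit while its constant part converges to the common value fixed by the directional slices, identifying the limit with a constant vector $\gamma\mathbf{u}(\mathbf{x})$ and yielding \eqref{apr}. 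The delicate point is the uniformity of the Korn constant as $\rho\to0^{+}$ on the shrinking, merely finite-perimeter domains $B_{\rho}(\mathbf{x},\bnu)\cap\Omega$; it is exactly the blow-up to an honest half-ball that keeps these constants asymptotically controlled. Alternatively, this entire upgrade can be bypassed by invoking directly the trace theory for functions of bounded deformation of Temam and Strang \cite{Temam,RT1} (see also \cite[Proposition~4.1]{bab}), which establishes once and for all the $\mathcal{H}^{N-1}$-a.e.\ existence of one-sided approximate limits; the slicing argument above is then simply the mechanism underlying that theory, specialized to the essential boundary of a set of finite perimeter.
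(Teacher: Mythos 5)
The paper contains no proof of this proposition at all: it is imported from the literature, the stated justification being that $LD^{p}(\Omega)\subset BD(\Omega)$ and that the trace theory of \cite{Temam,RT1} (described carefully in \cite[Proposition 4.1]{bab}) then applies to give the one-sided approximate limit \eqref{apr} $\mathcal{H}^{N-1}$-a.e.\ on $\partial^{\ast}\Omega$. Your closing sentence --- bypassing the whole construction by invoking Temam--Strang and Babadjian directly --- is therefore \emph{exactly} the paper's route, and as a proof of the proposition it is sound.

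Your from-scratch sketch, however, has a genuine gap precisely at the step you flag as ``delicate.'' Korn (or Korn--Poincar\'e) inequalities are simply not available on the sets $B_{\rho}(\mathbf{x},\bnu)\cap\Omega$ when $\Omega$ is merely of finite perimeter: these intersections can carry cusps or worse at every scale, and the failure of Korn on such domains is the very phenomenon motivating this paper (see the introduction and the final remark citing \cite{Acosta3}). The De Giorgi blow-up does not rescue you: $L^{1}_{\mathrm{loc}}$ convergence of $\rho^{-1}(\Omega-\mathbf{x})$ to the half-space $\{\mathbf{y}\cdot\bnu>0\}$ controls volumes, hence the normalization $2/\omega_{N}(\rho)$, but Korn constants are destroyed by spikes and cusps of vanishing measure and are not stable under measure convergence of domains, so ``asymptotically controlled constants'' cannot be extracted this way. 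A secondary inaccuracy: you kill the right-hand side at ``Lebesgue points of $\mathbb{D}\mathbf{u}$,'' but points of $\partial^{\ast}\Omega$ are $\mathcal{L}^{N}$-null, so Lebesgue-point theory says nothing there; what is needed is the density estimate $\rho^{1-N}\int_{B_{\rho}(\mathbf{x})\cap\Omega}|\mathbb{D}\mathbf{u}|\,d\mathbf{y}\to 0$ for $\mathcal{H}^{N-1}$-a.e.\ $\mathbf{x}$, valid for any measure absolutely continuous with respect to $\mathcal{L}^{N}$ (and note that for $p<N$ a bare H\"older bound gives $\rho^{1-N/p}$, which diverges). Your first two paragraphs --- slicing, $\mathbf{a}\,\mathbb{D}\mathbf{u}\cdot\mathbf{a}$ as the derivative of $\mathbf{u}\cdot\mathbf{a}$ along lines, continuous extension to the endpoints identified by Proposition \ref{Prop2}\ref{Prop2.item3} --- are correct and in fact parallel the paper's Proposition \ref{theorem0} and Corollary \ref{cor}; but they only yield scalar directional traces, for each $\mathbf{a}$ on a $\lambda_{\mathbf{a}}$-full set, and the upgrade to the full vector statement \eqref{apr} with respect to $\mathcal{H}^{N-1}$ is precisely what the Korn step cannot deliver. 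The actual proofs in the literature (Temam--Strang, and the fine-properties theory of $BD$ used by Babadjian) avoid Korn entirely, working with slicing and rectifiability --- which is presumably why the paper, whose whole point is that Korn fails here, delegates the proposition to those references.
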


The assignment $\mathbf{x} \mapsto \gamma \mathbf{u}(\mathbf{x})$ defines a
trace map on the essential boundary of $\Omega$ for elements of $%
LD^p(\Omega) $.

We study next the integrability properties of the trace map. If $\Omega$ is
a Lipschitz domain, then the embedding $LD^{2}(\Omega )\hookrightarrow
L^{2}(\partial \Omega )$ can be established using the same approach of
Theorem 1.1 in \cite[ page 117]{Temam} and Theorem 3.2 in \cite{bab} (see
also the theorem and example given on pages 224-227 of \cite{Volpert}). For
cusp domains, the above result is generally not true, as the function
introduced in Example \ref{ex1} shows.

\begin{example}
\label{ex2} 
We consider again the function $\mathbf{w}_s$ defined in Equation %
\eqref{ex_w} on the cusp domain $V(x^2)$. This function belongs to $%
LD^{2}(V(x^{2}))$\ for any given $s<1+\frac{3}{2}.$ We then fix 
\begin{equation*}
s\in \lbrack \frac{3}{2},1+\frac{3}{2}),
\end{equation*}%
and calculate the $L^2$ norm of $\mathbf{w}_s$ along the part of the
boundary defined by $0<x<1,\quad y=0$ 
\begin{equation*}
\int_{0}^{1}x^{-2(s-1)}dx=+\infty ,
\end{equation*}%
that is, $\mathbf{w}_s\notin L^{2}(\partial V(x^{2}))$. We conclude that the
inclusion \ 
\begin{equation*}
\qquad LD^{2}(\Omega )\hookrightarrow L^{2}(\partial \Omega )
\end{equation*}%
is not valid if $\Omega$ has cusps.
\end{example}

The above example also shows, however, that there can be functions in $%
LD^2(\Omega)$ with square-integrable trace on the essential boundary, even
if $\Omega$ is rough (take $\mathbf{w}_s$ with $s$ small enough). This
observation justifies the introduction of the following space.

\begin{definition}
\label{LD_2_b} Let $\Omega \subset \mathbb{R}^{N}$ be a bounded open set
with finite perimeter.\ Let $LD_{\gamma }^{p}(\Omega )$ be the space of
functions $\mathbf{u}\in LD^{p}(\Omega )$ such that the trace of $\mathbf{u}$%
, $\gamma \mathbf{u}$, defined by Equation \eqref{apr}, is $p$%
th-integrable on the essential boundary $\partial ^{\ast }\Omega$ with
respect of Hausdorff measure $\mathcal{H}^{N-1}$. \ The space $LD_{\gamma
}^{p}(\Omega )$ is a normed vector space with norm 
\begin{equation*}
\left\Vert \mathbf{u}\right\Vert _{LD_{\gamma }^{p}(\Omega
)}^{p}=\int_{\Omega }|\mathbb{D}\mathbf{u|}^{p}\,d\mathbf{x}+\int_{\partial
^{\ast }\Omega }|\gamma \mathbf{u|}^{p}\,d\mathcal{H}^{N-1}(\mathbf{x}).
\end{equation*}
\end{definition}

\begin{remark}
The space $LD_{\gamma }^{2}(\Omega )$ appears naturally in the construction
of the weak solutions to the problem of motion of rigid bodies in a viscous
fluid under Navier slip conditions. We refer to the \emph{a priori} estimate
(2.8) established in Theorem 2.1 of \cite{cnn} (see also the \emph{a priori}
estimate (4.5) in Theorem 1 of \cite{GH2}).
\end{remark}

Our main result is based on an extension of the Fundamental Theorem of
Calculus on sections, valid for functions with integrable variation in $%
W^{1,p}$, to functions with integrable deformation in $LD^{p}$. To this end,
we introduce the following notation. For given $\mathbf{a}\in \mathbb{S}%
^{N-1}$ \ we define the section 
\begin{equation*}
\Omega _{\mathbf{y},\mathbf{a}}=\left\{ t\in \mathbb{R}:\mathbf{y}+t\mathbf{a%
}\,\in \Omega \right\}
\end{equation*}%
of $\Omega $ corresponding to a point $\mathbf{y}\in P_{\mathbf{a}}(\subset 
\mathbb{R}^{N-1}).$ \ If $\Omega _{\mathbf{y},\mathbf{a}}$ is empty, we set%
\begin{equation*}
\int_{\Omega _{\mathbf{y},\mathbf{a}}}f(\mathbf{y}+t\mathbf{a})\ dt=0
\end{equation*}%
for any $\mathcal{L}^{N}-$integrable function $f:\Omega \rightarrow \mathbb{R%
}$. Then the Fubini-Tonelli theorem implies that 
\begin{equation}
\int_{\Omega }f(\mathbf{x})\,d\mathbf{x}=\int_{P_{\mathbf{a}}}\left(
\int_{\Omega _{\mathbf{y},\mathbf{a}}}f(\mathbf{y}+t\mathbf{a})\ dt\right) d%
\mathcal{L}^{N-1}(\mathbf{y)}.  \label{z16}
\end{equation}

The following result, showing absolute continuity on lines, is an analogue
of Theorems 7.13 and 10.35 in \cite{Leoni}.
\begin{proposition}
\label{theorem0} Let $\Omega \subset \mathbb{R}^{N}$ \ be an open set. Let $%
\mathbf{a}_{k}\in \mathbb{S}^{N-1},$ \ $k=1,...,N,$ be arbitrary linearly
independent vectors.

For given $\mathbf{u}\in LD^{p}(\Omega )$ there exists a representative $%
\overline{\mathbf{u}}$ of $\mathbf{u},$ such for each\quad $k=1,...,N$ and $%
\mathcal{L}^{N-1}$- a.e. $\mathbf{y}\in \mathbb{R}^{N-1}$ the function 
\begin{equation*}
v_{k}(t)=\mathbf{a}_{k}\,\cdot \overline{\mathbf{u}}(\mathbf{y}+t\mathbf{a}%
_{k})
\end{equation*}%
is absolutely continuous on $t\in \Omega _{\mathbf{y},k}=\Omega _{\mathbf{y},%
\mathbf{a}_{k}}$\ and\ the following formula%
\begin{equation}
v_{k}(t)=v_{k}(t^{\prime })+\int_{t^{\prime }}^{t}\mathbf{a}_{k}\mathbb{D}%
\mathbf{u}(\mathbf{y}+s\mathbf{a}_{k})\cdot \mathbf{a}_{k}\,\ ds  \label{abs}
\end{equation}%
is valid for any $[t^{\prime },t]\subset \Omega _{\mathbf{y},k}.$
\end{proposition}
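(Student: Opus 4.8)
The plan is to reduce the vector-valued statement to a one-dimensional scalar statement for the directional component $\mathbf{a}_k\cdot\mathbf{u}$, the point being that, although $\mathbf{u}\in LD^{p}(\Omega)$ need not lie in $W^{1,p}(\Omega)$, the directional derivative of $\mathbf{a}_k\cdot\mathbf{u}$ in the direction $\mathbf{a}_k$ is controlled by $\mathbb{D}\mathbf{u}$ alone. First I would fix $k$ and set $g_k:=\mathbf{a}_k\cdot\mathbf{u}\in L^{p}(\Omega)$, and establish the distributional identity
\begin{equation*}
\partial_{\mathbf{a}_k} g_k \;=\; \mathbf{a}_k\cdot \mathbb{D}\mathbf{u}\cdot\mathbf{a}_k \;=:\; h_k \;\in\; L^{p}(\Omega),
\end{equation*}
where $\partial_{\mathbf{a}_k}\varphi=\mathbf{a}_k\cdot\nabla\varphi$. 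Concretely, for every $\varphi\in C_{c}^{\infty}(\Omega)$ one expands $\langle h_k,\varphi\rangle=\sum_{i,j}(a_k)_i (a_k)_j\langle d_{ij}(\mathbf{u}),\varphi\rangle$ and uses the definition of $d_{ij}(\mathbf{u})=\tfrac12(\partial_j u_i+\partial_i u_j)$ as a distribution; relabelling $i\leftrightarrow j$ in the second half shows that the antisymmetric part of the (possibly non-existent) gradient cancels, leaving exactly $-\int_{\Omega}(\mathbf{a}_k\cdot\mathbf{u})(\mathbf{a}_k\cdot\nabla\varphi)\,d\mathbf{x}$. This cancellation is the crux of the matter: it is precisely what makes the merely-$LD^{p}$ datum sufficient, and it requires no smoothness or mollification since it lives directly at the level of distributions.

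With the scalar identity in hand, I would invoke the absolute-continuity-on-lines characterization, i.e. the single-direction version of Theorems 7.13 and 10.35 in \cite{Leoni}. Since those results are stated for coordinate directions, I would first rotate: choosing $R_k\in SO(N)$ with $R_k e_1=\mathbf{a}_k$ and setting $\tilde g_k(\mathbf{z})=g_k(R_k\mathbf{z})$, the distributional $e_1$-derivative of $\tilde g_k$ equals $h_k\circ R_k\in L^{p}$, because differentiating $g_k\circ R_k$ in $z_1$ is the same as differentiating $g_k$ in the direction $R_k e_1=\mathbf{a}_k$. Thus $\tilde g_k$ is an $L^{p}$ function whose first weak partial derivative is in $L^{p}$, so it admits a representative absolutely continuous on $\mathcal{L}^{N-1}$-a.e. line parallel to $e_1$, with classical derivative $h_k\circ R_k$ along those lines and the one-dimensional Fundamental Theorem of Calculus holding. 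Rotating back yields, for $\mathcal{L}^{N-1}$-a.e. $\mathbf{y}\in P_{\mathbf{a}_k}$, absolute continuity of $t\mapsto (\mathbf{a}_k\cdot\mathbf{u})(\mathbf{y}+t\mathbf{a}_k)$ on $\Omega_{\mathbf{y},k}$ together with formula \eqref{abs}, since the integrand there is exactly $h_k(\mathbf{y}+s\mathbf{a}_k)=\mathbf{a}_k\,\mathbb{D}\mathbf{u}(\mathbf{y}+s\mathbf{a}_k)\cdot\mathbf{a}_k$. Here the Fubini–Tonelli identity \eqref{z16}, applied to $h_k\in L^{1}_{\mathrm{loc}}(\Omega)$, is what guarantees that $h_k$ is integrable on $\mathcal{L}^{N-1}$-a.e. section, so that the line integral is finite.

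The one genuinely global point is that a single representative $\overline{\mathbf{u}}$ must serve all $N$ directions at once, whereas the previous step produces, a priori, a separate good representative of each scalar $g_k$. I would resolve this by taking $\overline{\mathbf{u}}$ to be the precise (Lebesgue) representative of $\mathbf{u}$, defined at every Lebesgue point by $\overline{\mathbf{u}}(\mathbf{x})=\lim_{r\to0^+}\frac{1}{\omega_N(r)}\int_{B_r(\mathbf{x})}\mathbf{u}(\mathbf{y})\,d\mathbf{y}$. Because this assignment depends linearly on $\mathbf{u}$, one has $\mathbf{a}_k\cdot\overline{\mathbf{u}}=\overline{g_k}$, the precise representative of $g_k$, at every point where both sides are defined; the exceptional set where the limit fails is $\mathcal{L}^{N}$-null, hence by \eqref{z16} meets $\mathcal{L}^{N-1}$-a.e. line in an $\mathcal{L}^{1}$-null set and does not affect absolute continuity. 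Since the representative produced in the previous step coincides a.e. with $g_k$ and may be taken to be $\overline{g_k}$, the single field $\overline{\mathbf{u}}$ simultaneously makes every $v_k(t)=\mathbf{a}_k\cdot\overline{\mathbf{u}}(\mathbf{y}+t\mathbf{a}_k)$ absolutely continuous with \eqref{abs}. I expect the main obstacle to be conceptual rather than computational, namely recognizing and justifying the antisymmetric cancellation of the first step at the level of distributions for data that is only in $LD^{p}$; the assembly of a common representative across all $N$ directions is the chief technical, as opposed to conceptual, point.
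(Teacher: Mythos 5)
Your overall route is genuinely different from the paper's: you first establish the distributional identity $\partial_{\mathbf{a}_k}(\mathbf{a}_k\cdot\mathbf{u})=\mathbf{a}_k\,\mathbb{D}\mathbf{u}\cdot\mathbf{a}_k$ by symmetrizing the test-function pairing, then reduce to a scalar function with a single weak partial derivative in $L^p$ and invoke a one-directional ACL theorem after a rotation. The symmetrization step is correct and is exactly the cancellation the paper exploits, except the paper performs it on the mollified fields $\mathbf{u}^{\varepsilon}$, where it is just the chain rule. Be aware, though, that the ``single-direction version of Theorems 7.13 and 10.35'' is not literally a citable statement in \cite{Leoni} (those theorems assume all partials are in $L^p$); its proof is precisely the mollification--Fubini--subsequence argument that the paper writes out in full, so this part of your proposal is not so much an alternative proof as a relabelling of the paper's work as a black box. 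That is acceptable, but you would need to state and prove the one-directional lemma.

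The genuine gap is in your final step, the construction of a single representative $\overline{\mathbf{u}}$ serving all $N$ directions. You take the precise (Lebesgue) representative and argue that its exceptional set is $\mathcal{L}^{N}$-null, hence meets $\mathcal{L}^{N-1}$-a.e.\ line in an $\mathcal{L}^{1}$-null set, and ``does not affect absolute continuity.'' That last inference is false: a function that coincides with an absolutely continuous function only outside an $\mathcal{L}^{1}$-null subset of the line (or is undefined/set to zero there) is in general not even continuous on the line, so the statement of the proposition --- absolute continuity of $t\mapsto\mathbf{a}_k\cdot\overline{\mathbf{u}}(\mathbf{y}+t\mathbf{a}_k)$ at \emph{every} $t$ --- is not obtained. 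What you actually need is that for a.e.\ line the precise representative is defined and equals the AC representative at \emph{every} point of the line. This is true, but it is not a Fubini triviality: one has to show, for instance, that the set where $\limsup_{r\to0}r^{-(N-1)}\int_{B_r(\mathbf{x})}|\mathbb{D}\mathbf{u}|\,d\mathbf{y}>0$ has $\mathcal{H}^{N-1}$-measure zero (hence null projection onto $P_{\mathbf{a}_k}$), and combine this with a Lebesgue-point argument in the transverse variables uniform in $t$. The paper avoids this entirely by defining $\overline{\mathbf{u}}$ as the pointwise limit of a mollified subsequence on the set $E$ where that limit exists, and then propagating membership in $E$ along whole segments via the integral identity \eqref{abs} itself (its \eqref{z20}--\eqref{z21}), gluing over rational rectangles. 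Either supply the $\mathcal{H}^{N-1}$-density argument for the precise representative, or replace it with a limit-of-mollifications representative as in the paper; as written, the common-representative step does not close.
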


\begin{proof}
We consider a sequence of standard mollifiers $\{\varphi _{\varepsilon
}\}_{\varepsilon >0}$ (see C.4, pages 552-560, of \cite{Leoni}), and for
every $\varepsilon >0$ define 
\begin{equation*}
\mathbf{u}^{\varepsilon }=\mathbf{u}\ast \varphi _{\varepsilon }\qquad \text{%
on \quad }\Omega _{\varepsilon }=\left\{ \mathbf{x}\in \Omega :~dist(\mathbf{%
x},\partial \Omega )>\varepsilon \right\} .
\end{equation*}%
By the same approach as in Lemma 10.16 of \cite{Leoni}, we have%
\begin{equation*}
\lim_{\varepsilon \rightarrow 0^{+}}\int_{\Omega _{\varepsilon }}\left( |%
\mathbf{u}^{\varepsilon }-\mathbf{u}|^{p}\,+|\mathbb{D}\mathbf{u}%
^{\varepsilon }-\mathbb{D}\mathbf{u}|^{p}\right) \,d\mathbf{x}=0.
\end{equation*}%
Using \eqref{z16}, it follows that 
\begin{eqnarray*}
\int_{P_{k}}\left( \int_{\Omega _{\mathbf{y},k}}|\mathbb{D}\mathbf{u}(%
\mathbf{y}+t\mathbf{a}_{k})|^{p}\,dt\right) \ d\mathcal{L}^{N-1}(\mathbf{y)}
&<&\infty , \\
\lim_{\varepsilon \rightarrow 0^{+}}\int_{P_{k}}\left( \int_{(\Omega
_{\varepsilon })_{\mathbf{y},k}}|\mathbb{D}\mathbf{u}^{\varepsilon }(\mathbf{%
y}+t\mathbf{a}_{k})-\mathbb{D}\mathbf{u}(\mathbf{y}+t\mathbf{a}%
_{k})|^{p}\,dt\right) \ d\mathcal{L}^{N-1}(\mathbf{y)} &=&0.
\end{eqnarray*}%
Therefore, there exists a subsequence $\{\varepsilon _{n}\},$ such that for $%
\mathcal{L}^{N-1}-$a.a. $\mathbf{y}\in P_{k}$, $k=1,...,N,$ \ we have 
\begin{eqnarray}
\int_{\Omega _{\mathbf{y},k}}|\mathbb{D}\mathbf{u}(\mathbf{y}+t\mathbf{a}%
_{k})|^{p}\,dt &<&\infty ,  \notag \\
\lim_{\varepsilon _{n}\rightarrow 0^{+}}\int_{(\Omega _{\varepsilon _{n}})_{%
\mathbf{y},k}}|\mathbb{D}\mathbf{u}^{\varepsilon _{n}}(\mathbf{y}+t\mathbf{a}%
_{k})-\mathbb{D}\mathbf{u}(\mathbf{y}+t\mathbf{a}_{k})|^{p}\,dt &=&0.
\label{z17}
\end{eqnarray}

If we set $\mathbf{u}^{_{n}}=\mathbf{u}^{\varepsilon _{n}},$ then the
sequence $\left\{ \mathbf{u}_{n}\right\} $ converges point-wise to $\mathbf{u%
}$\ \ for $\mathcal{L}^{N}-$ a.a. points of $\Omega $ by Theorem C.19 and
Corollary B.122 of \cite{Leoni}. Therefore, the set 
\begin{equation*}
E=\{\mathbf{x}\in \Omega :~\lim_{n\rightarrow \infty }\mathbf{u}^{_{n}}(%
\mathbf{x})\text{ \ exists in \ }\mathbb{R}^{N}\}
\end{equation*}%
is well defined and such that $\mathcal{L}^{N}(\Omega \backslash E)=0.$ We let 
\begin{equation*}
\overline{\mathbf{u}}(\mathbf{x})=\left\{ 
\begin{array}{ll}
\lim_{n\rightarrow \infty }\mathbf{u}^{_{n}}(\mathbf{x}), & \text{if }%
\mathbf{x}\in E; \\ 
0, & \text{if }\mathbf{x}\in \Omega \backslash E%
\end{array}%
\right.
\end{equation*}%
and the function $\overline{\mathbf{u}}$ is one of the representatives the
equivalence class of $\mathbf{u}$.\ \ Fubini's theorem implies that%
\begin{equation*}
\int_{P_{k}}\mathcal{L}^{1}(\left\{ t\in \mathbb{R}:\quad \mathbf{y}+t%
\mathbf{a}_{k}\notin E\,\right\} \cap \Omega _{\mathbf{y},k})\,\ d\mathcal{L}%
^{N-1}(\mathbf{y)}=0.
\end{equation*}%
Thus for $\mathcal{L}^{N-1}-$a.a. $\mathbf{y}\in P_{k}$, 
\begin{equation}
\mathbf{y}+t\mathbf{a}_{k}\in E\quad \text{for}\quad \mathcal{L}^{1}-\text{%
a.a. \ }t\in \Omega _{\mathbf{y},k},\text{ \quad }\forall k=1,...,N.
\label{z00}
\end{equation}

We denote a generic $N$-dimensional rectangle with the edges parallel to the
vectors $\mathbf{a}_{1},...,\mathbf{a}_{N}$ in $\mathbb{R}^{N}$ by%
\begin{equation*}
\mathbb{A}=\left\{ \mathbf{x}=\sum_{i,j=1}^{N}t_{k}\,\mathbf{a}_{k}:\quad
t_{k}\in \lbrack c_{k},d_{k}]\subset \mathbb{R},\text{\quad }\forall
k=1,...,N\right\} .
\end{equation*}%
We take rectangles $\mathbb{A}\subset \Omega $ \ with $c_{k},d_{k}$ all
rational numbers. For $\varepsilon >0$ sufficiently small, $\mathbb{A}%
\subset \Omega _{\varepsilon },$ so that by \eqref{z17} we have that for $\ 
\mathcal{L}^{N-1}-$a.a. \ $\mathbf{y}\in P_{k}$ 
\begin{eqnarray}
\int_{c_{k}}^{d_{k}}|\mathbb{D}\mathbf{u}(\mathbf{y}+t\mathbf{a}%
_{k})|^{p}\,dt &<&\infty ,  \notag \\
\lim_{n\rightarrow \infty }\int_{c_{k}}^{d_{k}}|\mathbb{D}\mathbf{u}^{n}(%
\mathbf{y}+t\mathbf{a}_{k})-\mathbb{D}\mathbf{u}(\mathbf{y}+t\mathbf{a}%
_{k})|^{p}\ dt &=&0.  \label{z19}
\end{eqnarray}%
\ 

For each $k=1,...,N$ and$\ \mathcal{L}^{N-1}-$a.a. \ $\mathbf{y}\in P_{k},$\
we define%
\begin{equation*}
v_{k}^{n}(t)=\mathbf{u}^{n}(\mathbf{y}+t\mathbf{a}_{k})\cdot \mathbf{a}%
_{k},\ \text{ \quad }t\in \lbrack c_{k},d_{k}].
\end{equation*}%
Using \eqref{z00} we can choose $t^{\prime }\in \lbrack c_{k},d_{k}]$ such
that $\mathbf{y}+t^{\prime }\mathbf{a}_{k}\,\in E.$\ \ Then, the following
limit exists 
\begin{equation}
v_{k}^{n}(t^{\prime })\underset{n\rightarrow \infty }{\longrightarrow }%
v_{k}(t^{\prime })=\overline{\mathbf{u}}(\mathbf{y}+t^{\prime }\mathbf{a}%
_{k})\cdot \mathbf{a}_{k}.  \label{z18}
\end{equation}%
Since $v_{k}^{n}\in C^{\infty }([c_{k},d_{k}]),$ we have%
\begin{eqnarray*}
v_{k}^{n}(t) &=&v_{k}^{n}(t^{\prime })+\int_{t^{\prime }}^{t}\frac{d}{ds}%
\left( v_{k}^{n}(s)\right) \ ds \\
&=&v_{k}^{n}(t^{\prime })+\int_{t^{\prime
}}^{t}\sum_{i,j=1}^{N}a_{i}a_{j}d_{ij}(\mathbf{u}^{n}(\mathbf{y}+s\mathbf{a}%
_{k}))\ ds\qquad \text{for all}\quad t\in \lbrack c_{k},d_{k}].
\end{eqnarray*}%
Hence, \eqref{z19}-\eqref{z18} imply the existence of the limit 
\begin{equation*}
\lim_{n\rightarrow \infty }v_{k}^{n}(t)=\overline{\mathbf{u}}(\mathbf{y}%
+t^{\prime }\mathbf{a}_{k})\cdot \mathbf{a}_{k}+\int_{t^{\prime }}^{t}%
\mathbf{a}_{k}\mathbb{D}\mathbf{u}(\mathbf{y}+s\mathbf{a}_{k}\,)\cdot 
\mathbf{a}_{k}\ ds,\quad \quad \forall t\in \lbrack c_{k},d_{k}].
\end{equation*}%
The definition of $E$ and $\overline{\mathbf{u}}$ give then that, for each $%
k=1,...,N$ \ and$\ \ \mathcal{L}^{N-1}-$a.a. \ $\mathbf{y}\in P_{k},$ 
\begin{equation}
\left\{ \mathbf{y}+t\mathbf{a}_{k}\,:\quad \forall t\in \lbrack
c_{k},d_{k}]\right\} \subset E  \label{z20}
\end{equation}%
(compare with \eqref{z00}) and the functions $v_{k}(t)=\mathbf{a}_{k}\,\cdot 
\overline{\mathbf{u}}(\mathbf{y}+t\mathbf{a}_{k}\,)$ \ satisfy%
\begin{equation}
v_{k}(t)=v_{k}(t^{\prime }\mathbf{)}+\int_{t^{\prime }}^{t}\mathbf{a}_{k}%
\mathbb{D}(\mathbf{u}(\mathbf{y}+s\mathbf{a}_{k}))\cdot \mathbf{a}%
_{k}\,ds,\quad \quad \forall t\in \lbrack c_{k},d_{k}].  \label{z21}
\end{equation}%
Consequently, each function $v_{k}\mathbf{=}v_{k}(t)$ is absolutely
continuous on $[c_{k},d_{k}]$ and $\ $%
\begin{equation*}
\frac{dv}{dt}(t)=\mathbf{a}_{k}\mathbb{D}(\mathbf{u}(\mathbf{y}+t\mathbf{a}%
_{k}))\cdot \mathbf{a}_{k}\qquad \text{for \ }\mathcal{L}^{1}\text{ - a.e.}%
\quad t\in \lbrack c_{k},d_{k}],
\end{equation*}%
which can be shown as in Lemma 3.31 of \cite{Leoni}.

Now, if $\widetilde{\mathbb{A}}\subset \Omega $ is another such rectangle
with the property that 
\begin{equation*}
\lbrack c_{k},d_{k}]\cap \lbrack \widetilde{c}_{k},\widetilde{d}_{k}]\neq
\varnothing ,\text{\quad }\forall k=1,...,N,
\end{equation*}%
then, by taking $\mathbf{y}\in P_{k}$, which is admissible for both $\mathbb{%
A}$ and $\widetilde{\mathbb{A}}$, \ and $\ t^{\prime }\in \lbrack
c_{k},d_{k}]\cup \lbrack \widetilde{c}_{k},\widetilde{d}_{k}],$ it follows
that from \eqref{z20} and \eqref{z21} that $v_{k}$ is absolutely continuous
on the interval $[c_{k},d_{k}]\cup \lbrack \widetilde{c}_{k},\widetilde{d}%
_{k}].$

Since $\Omega $ can be written as a countable union of closed rectangles of
this type and since the union of countably many sets of $\mathcal{L}^{N-1}$%
-measure zero still has $\mathcal{L}^{N-1}$-measure zero, using \eqref{z20}, %
\eqref{z21} we conclude that for $\mathcal{L}^{N-1}$-a.e. $\mathbf{y}\in
P_{k}$, \ the function $v_{k}(t)$ \ is absolutely continuous on any
connected component of $\Omega _{\mathbf{y},k}$.
\end{proof}

\bigskip

\bigskip

\bigskip

Next, we formulate and prove a result concerning a non-tangential approach
to characterize the trace. In what follows, we let $\Omega $ be an open
set of finite perimeter. We recall that $\bm{\nu }=\bm{\nu }(\mathbf{x})\in 
\mathbb{S}^{N-1}$ denotes the internal normal at $\mathbf{x}\in \partial
^{\ast }\Omega $\ and $B_{1}(\mathbf{x},\bm{\nu })$ is the half ball with
radius equal 1, defined by\ \eqref{b}.

Let $\mathbf{a}\in \mathbb{S}^{N-1}\cap B_{1}(\mathbf{x},\bm{\nu })$\ be
arbitrary fixed vector and set 
\begin{equation}
\lambda _{\mathbf{a}}(E)=\mathcal{L}^{N-1}(\pi _{\mathbf{a}}E)
\label{lambda}
\end{equation}%
for any measurable (Borel) set $E\subset \mathbb{R}^{N}$, where $\pi _{%
\mathbf{a}}E$ is the projection of the set $E\subset \mathbb{R}^{N}$ onto
the plane $P_{\mathbf{a}}$. (We refer to \cite[pages 235-236]{Volpert2} for a
discussion of properties of the Borel measure $\lambda _{\mathbf{a}}$.)

\begin{proposition}
\label{direction_trace} Let $\mathbf{u}\in LD_{\gamma }^{p}(\Omega )$. The
following limit exists 
\begin{equation}
\gamma _{\mathbf{a}}\mathbf{u}(\mathbf{x})=\lim_{\varepsilon \rightarrow
0^{+}}\frac{1}{\varepsilon }\int_{0}^{\varepsilon }\mathbf{u}(\mathbf{x}+s%
\mathbf{a})\,ds\qquad \text{for\quad }\lambda _{\mathbf{a}}-a.e.\quad 
\mathbf{x}\in \partial ^{\ast }\Omega ,  \label{eq:direction_trace}
\end{equation}%
such that%
\begin{equation}
\gamma _{\mathbf{a}}\mathbf{u}(\mathbf{x})=\gamma \mathbf{u}(\mathbf{x}%
)\qquad \text{for\quad }\lambda _{\mathbf{a}}-a.e.\quad \mathbf{x}\in
\partial ^{\ast }\Omega .
\end{equation}
\end{proposition}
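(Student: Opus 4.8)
The plan is to reduce everything to the single quantitative estimate that
$\frac{1}{\varepsilon }\int_{0}^{\varepsilon }|\mathbf{u}(\mathbf{x}+s\mathbf{a})-\gamma \mathbf{u}(\mathbf{x})|\,ds\to 0$ as $\varepsilon \rightarrow 0^{+}$ for $\lambda _{\mathbf{a}}$-a.e. $\mathbf{x}\in \partial ^{\ast }\Omega$, since this bound at once gives the existence of the limit \eqref{eq:direction_trace} and its identification with $\gamma \mathbf{u}$. A useful preliminary remark is that, as the orthogonal projection $\pi _{\mathbf{a}}$ onto $P_{\mathbf{a}}$ is $1$-Lipschitz, $\lambda _{\mathbf{a}}(A)=\mathcal{L}^{N-1}(\pi _{\mathbf{a}}A)\leq \mathcal{H}^{N-1}(A)$, so that every $\mathcal{H}^{N-1}$-null set is $\lambda _{\mathbf{a}}$-null. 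Hence all the $\mathcal{H}^{N-1}$-a.e. facts already available, namely the existence of the inner normal $\bm{\nu }$ (Proposition \ref{Prop2}, item (ii)) and of the trace $\gamma \mathbf{u}$ with the half-ball property \eqref{apr} (Proposition \ref{approx_trace}), continue to hold $\lambda _{\mathbf{a}}$-a.e., and I may freely restrict to the set of $\mathbf{x}$ where $\mathbf{a}\cdot \bm{\nu }(\mathbf{x})>0$, i.e. where $\mathbf{a}$ points into $\Omega $.

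Next I would pass to one-dimensional sections in the direction $\mathbf{a}$. By Proposition \ref{Prop2}, item \ref{Prop2.item3}, for $\mathcal{L}^{N-1}$-a.e. $\mathbf{y}\in P_{\mathbf{a}}$ the section $\Omega _{\mathbf{y},\mathbf{a}}$ is a finite union of open intervals whose endpoints are exactly the points of $l_{\mathbf{a}}(\mathbf{y})\cap \partial ^{\ast }\Omega $; combined with the disintegration of $\lambda _{\mathbf{a}}$ along the fibers of $\pi _{\mathbf{a}}$ (see \cite[pp. 235--236]{Volpert2}), this recasts the $\lambda _{\mathbf{a}}$-a.e. claim as the statement that, for $\mathcal{L}^{N-1}$-a.e. $\mathbf{y}$ and each lower endpoint $\tau $ of a section interval, $\frac{1}{\varepsilon }\int_{\tau }^{\tau +\varepsilon }|\mathbf{u}(\mathbf{y}+t\mathbf{a})-\gamma \mathbf{u}(\mathbf{y}+\tau \mathbf{a})|\,dt\to 0$. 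When $\mathbf{a}\cdot \bm{\nu }>0$ the point $\mathbf{y}+\tau \mathbf{a}$ is indeed a lower endpoint, so $(\tau ,\tau +\varepsilon )\subset \Omega _{\mathbf{y},\mathbf{a}}$ for $\varepsilon $ small, and Fubini \eqref{z16} guarantees that $t\mapsto \mathbf{u}(\mathbf{y}+t\mathbf{a})$ is in $L^{1}$ on the section for a.e. $\mathbf{y}$.

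I would then split the directional average into its component along $\mathbf{a}$ and the part orthogonal to it. For the scalar $v(t)=\mathbf{a}\cdot \overline{\mathbf{u}}(\mathbf{y}+t\mathbf{a})$, Proposition \ref{theorem0} shows that $v$ is absolutely continuous on the section interval with $v^{\prime }(t)=\mathbf{a}\,\mathbb{D}\mathbf{u}(\mathbf{y}+t\mathbf{a})\cdot \mathbf{a}$; as $\mathbb{D}\mathbf{u}$ is integrable on the line by \eqref{z16}, $v$ extends continuously up to $\tau $, the one-sided average of $\mathbf{a}\cdot \mathbf{u}$ converges to $v(\tau )$, and I would match this value to $\mathbf{a}\cdot \gamma \mathbf{u}(\mathbf{y}+\tau \mathbf{a})$ using \eqref{apr}. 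For the components orthogonal to $\mathbf{a}$, however, membership in $LD^{p}$ provides no control of the derivative along $\mathbf{a}$, so no such one-dimensional absolute-continuity argument is available.

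This is precisely the main obstacle: slicing in the single direction $\mathbf{a}$ controls only $\mathbf{a}\cdot \mathbf{u}$, whereas the assertion concerns the full vector. To close the gap I would return to the $N$-dimensional Lebesgue property \eqref{apr}. Fixing $\mathbf{x}$ with $\mathbf{a}\cdot \bm{\nu }(\mathbf{x})>0$, a fixed-aperture cone $C_{\rho }$ with apex $\mathbf{x}$, axis $\mathbf{a}$ and height $\rho $ lies inside the half-ball $B_{\rho }(\mathbf{x},\bm{\nu })$ and fills a fixed fraction of its volume; since $\mathbf{x}\in \partial ^{\ast }\Omega $ forces $\mathcal{L}^{N}(C_{\rho }\backslash \Omega )=o(\omega _{N}(\rho ))$ by the half-space blow-up at points where $\bm{\nu }$ exists, \eqref{apr} gives that the full-vector average of $\mathbf{u}$ over $C_{\rho }$ converges to $\gamma \mathbf{u}(\mathbf{x})$. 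The remaining and most delicate step is to pass from this conical, angularly averaged quantity to the single-ray average \eqref{eq:direction_trace}: this cannot be done for an individual $\mathbf{x}$, but it can be achieved for $\lambda _{\mathbf{a}}$-a.e. $\mathbf{x}$ by a Fubini argument over the parallel fibers $l_{\mathbf{a}}(\mathbf{y})$, which is exactly where the projection measure $\lambda _{\mathbf{a}}$ enters. I expect the careful justification of this disintegration, and the verification that the exceptional sets are $\lambda _{\mathbf{a}}$-null, to be the technical heart of the argument; the equality $\gamma _{\mathbf{a}}\mathbf{u}=\gamma \mathbf{u}$ then follows at once, both being the common limit.
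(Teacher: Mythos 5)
Note first that the paper does not actually prove this proposition: it states it and then omits the proof, deferring to the Theorem of Section 11.2, pp.~243--245, of \cite{Volpert2}, with the remark that that argument uses only the structure of sets of finite perimeter and the existence of the trace values $\gamma \mathbf{u}$, here supplied by Proposition \ref{approx_trace}. Your proposal is therefore more ambitious than the text. Its first half is sound: the observation that $\lambda _{\mathbf{a}}(A)\leq \mathcal{H}^{N-1}(A)$ (so $\mathcal{H}^{N-1}$-a.e.\ facts persist $\lambda _{\mathbf{a}}$-a.e.), the reduction to one-dimensional sections via Proposition \ref{Prop2}~\ref{Prop2.item3}, and the treatment of the component $\mathbf{a}\cdot \mathbf{u}$ through the absolute continuity of Proposition \ref{theorem0} are all correct, and you rightly avoid the circularity of invoking Corollary \ref{cor}, whose proof depends on the present proposition.

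The genuine gap is exactly where you place it, and the repair you sketch does not close it. For the components of $\mathbf{u}$ orthogonal to $\mathbf{a}$ the only available information is the $N$-dimensional half-ball condition \eqref{apr}, and the ``Fubini argument over the parallel fibers'' yields, at best, that the averaged difference $F_{\varepsilon }(\mathbf{x})=\frac{1}{\varepsilon }\int_{0}^{\varepsilon }|\mathbf{u}(\mathbf{x}+s\mathbf{a})-\gamma \mathbf{u}(\mathbf{x})|\,ds$ tends to $0$ in $L^{1}(\lambda _{\mathbf{a}})$ as $\varepsilon \rightarrow 0^{+}$ (and even this requires handling the variation of $\gamma \mathbf{u}(\mathbf{x})$ across nearby fibers, which you do not address). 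Convergence in $L^{1}(\lambda _{\mathbf{a}})$ gives $\lambda _{\mathbf{a}}$-a.e.\ convergence only along a subsequence of scales, not the full limit \eqref{eq:direction_trace}; the comparison $F_{\varepsilon }\leq 2F_{2^{-j}}$ for $\varepsilon \in \lbrack 2^{-j-1},2^{-j}]$ would upgrade dyadic to full convergence, but one still needs a.e.\ convergence along the entire dyadic sequence, which no step of your argument provides. This is precisely where Vol'pert's proof does real work, exploiting one-dimensional structure on lines that, for $LD^{p}\subset BD$, is available only for the single component $\mathbf{a}\cdot \mathbf{u}$ in the direction $\mathbf{a}$. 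As written, your proposal is an accurate map of the difficulty rather than a proof: to complete it you must either reproduce Vol'pert's covering argument or supply a maximal-function estimate for the sliced averages, neither of which appears in the sketch.
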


We omit the proof Proposition \ref{direction_trace}, as it is essentially
the same as that of Theorem in Section 11.2, pages 243-245, of \cite%
{Volpert2}. In fact the proof of this above-mentioned theorem relies on the
structure of the set with finite perimeter and the existence of the trace
values $\gamma \mathbf{u}$ for a given function $\mathbf{u}.$ In our case,
when $\mathbf{u}\in LD_{\gamma }^{p}(\Omega )$, \ the existence of $\gamma 
\mathbf{u}$ is guaranteed by Proposition \ref{approx_trace}.

\bigskip

\begin{corollary}
\label{cor}\bigskip Let $\Omega \subset \mathbb{R}^{N}$ \ be an open set of
finite perimeter. Then under the assumptions and the notation of
Proposition \ref{theorem0}, Formula \eqref{abs} is valid for any $t^{\prime
},t\in \mathbb{R},$ such that%
\begin{equation*}
\lbrack t^{\prime },t]\subset \overline{\Omega _{\mathbf{y},k}}.
\end{equation*}
\end{corollary}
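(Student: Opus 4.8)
The plan is to promote the absolute continuity of $v_k$ from the open section $\Omega_{\mathbf{y},k}$, provided by Proposition \ref{theorem0}, to its closure, by showing that $v_k$ admits finite one-sided limits at the endpoints of each component and that these limits are exactly the trace values $\gamma\mathbf{u}$. The finite-perimeter hypothesis enters only through this last identification, via Propositions \ref{Prop2} and \ref{direction_trace}.

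First I would fix $k$ and restrict attention to those $\mathbf{y}\in P_k$ lying in the intersection of four full-measure sets: (a) the set from Proposition \ref{theorem0}, on which $v_k(t)=\mathbf{a}_k\cdot\overline{\mathbf{u}}(\mathbf{y}+t\mathbf{a}_k)$ is absolutely continuous on each connected component of $\Omega_{\mathbf{y},k}$ and $\int_{\Omega_{\mathbf{y},k}}|\mathbb{D}\mathbf{u}(\mathbf{y}+t\mathbf{a}_k)|^p\,dt<\infty$; (b) the set from Proposition \ref{Prop2}\,\ref{Prop2.item3} taken with $\mathbf{a}=\mathbf{a}_k$, on which $\Omega_{\mathbf{y},k}$ agrees up to an $\mathcal{L}^1$-null set with $l_{\mathbf{a}_k}(\mathbf{y})\cap\Omega_{\ast}$, a finite union of open intervals with disjoint closures whose endpoints lie on $\partial^{\ast}\Omega$ (two open sets agreeing up to a null set have equal closure, so the endpoints of the components of $\Omega_{\mathbf{y},k}$ lie on $\partial^{\ast}\Omega$); and (c), (d) the sets from Proposition \ref{direction_trace} applied with the two directions $\pm\mathbf{a}_k$, on which the directional traces exist and agree with $\gamma\mathbf{u}$. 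Since $\pi_{\pm\mathbf{a}_k}(\mathbf{y}+t\mathbf{a}_k)=\mathbf{y}$, the measures $\lambda_{\pm\mathbf{a}_k}$ on $\partial^{\ast}\Omega$ project to $\mathcal{L}^{N-1}$ on $P_{\mathbf{a}_k}$, so all four conditions hold simultaneously for $\mathcal{L}^{N-1}$-a.e. $\mathbf{y}$. (Existence of $\gamma\mathbf{u}$ itself is already guaranteed for $\mathbf{u}\in LD^{p}(\Omega)$ by Proposition \ref{approx_trace}.)

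On a single component $(\alpha,\beta)$ of $\Omega_{\mathbf{y},k}$, the derivative $v_k'(t)=\mathbf{a}_k\mathbb{D}\mathbf{u}(\mathbf{y}+t\mathbf{a}_k)\cdot\mathbf{a}_k$ is integrable by (a), so $v_k$ extends to an absolutely continuous function on $[\alpha,\beta]$ with finite limits $v_k(\alpha^{+})$ and $v_k(\beta^{-})$. The crux is to identify these with trace values. At the left endpoint $\mathbf{x}_\alpha=\mathbf{y}+\alpha\mathbf{a}_k\in\partial^{\ast}\Omega$ the direction $\mathbf{a}_k$ points into $\Omega$, so Proposition \ref{direction_trace} gives
\[
\mathbf{a}_k\cdot\gamma\mathbf{u}(\mathbf{x}_\alpha)=\mathbf{a}_k\cdot\gamma_{\mathbf{a}_k}\mathbf{u}(\mathbf{x}_\alpha)=\lim_{\varepsilon\to 0^{+}}\frac{1}{\varepsilon}\int_{\alpha}^{\alpha+\varepsilon}v_k(t)\,dt=v_k(\alpha^{+}),
\]
the last step because $v_k$ is continuous at $\alpha^{+}$; the same computation with direction $-\mathbf{a}_k$ yields $\mathbf{a}_k\cdot\gamma\mathbf{u}(\mathbf{x}_\beta)=v_k(\beta^{-})$ at the right endpoint. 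Defining $v_k$ at $\alpha,\beta$ through these trace values thus makes $v_k$ absolutely continuous on the closed interval $[\alpha,\beta]=\overline{(\alpha,\beta)}$, and passing to the limits $t'\to\alpha^{+}$, $t\to\beta^{-}$ in Formula \eqref{abs} (valid on compact subintervals of $(\alpha,\beta)$ by Proposition \ref{theorem0}) shows that \eqref{abs} persists for every $[t',t]\subset[\alpha,\beta]$.

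Finally, since the components have disjoint closures, $\overline{\Omega_{\mathbf{y},k}}$ is a disjoint union of the closed intervals $[\alpha_i,\beta_i]$, and any connected set $[t',t]\subset\overline{\Omega_{\mathbf{y},k}}$ must lie inside a single such interval; the preceding step then gives Formula \eqref{abs}, as claimed. I expect the identification of $v_k(\alpha^{+})$ and $v_k(\beta^{-})$ with the trace values to be the main obstacle: it is precisely where one reconciles the Ces\`aro-average definition of the directional trace in Proposition \ref{direction_trace} with the genuine one-sided limit coming from absolute continuity, and where the several exceptional sets of measure zero must be arranged to be mutually compatible.
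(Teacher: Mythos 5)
Your proof is correct and follows essentially the same route as the paper's: both use Proposition \ref{Prop2}\,\ref{Prop2.item3} to reduce to finitely many closed component intervals with endpoints on $\partial^{\ast}\Omega$, and both identify the endpoint values with $\gamma\mathbf{u}$ by averaging over $(t_0',t_0'+\varepsilon)$ and invoking Proposition \ref{direction_trace}. Your write-up merely makes explicit the bookkeeping that the paper leaves implicit (compatibility of the several null sets, the identification of $\lambda_{\pm\mathbf{a}_k}$-a.e.\ with $\mathcal{L}^{N-1}$-a.e.\ on $P_{\mathbf{a}_k}$, and the existence of the one-sided limits of $v_k$ before matching them to the Ces\`aro averages).
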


\begin{proof}
By Proposition \ref{theorem0} we have that for $\mathcal{L}^{N-1}$- a.e. $%
\mathbf{y}\in P_{\mathbf{a}_{k}},$%
\begin{equation}
v_{k}(t)=v_{k}(t^{\prime })+\int_{t^{\prime }}^{t}\mathbf{a}_{k}\mathbb{D}%
\mathbf{u}(\mathbf{y}+s\mathbf{a}_{k})\cdot \mathbf{a}_{k}\ ds\qquad \text{%
for any \ }[t^{\prime },t]\subset \Omega _{\mathbf{y},k}\text{.}
\label{labs}
\end{equation}%
Moreover, there exists a finite number of disjoint intervals $[t_{0}^{\prime
},t_{0}]$, such that%
\begin{equation*}
t_{0}^{\prime }\neq t_{0},\text{\qquad }[t_{0}^{\prime },t_{0}]\subset 
\overline{\Omega _{\mathbf{y},k}}\text{\qquad and\qquad }\mathbf{y}%
+t_{0}^{\prime }\mathbf{a}_{k},\text{ }\mathbf{y}+t_{0}\mathbf{a}_{k}\in
\partial ^{\ast }\Omega 
\end{equation*}%
by  Proposition \ref{Prop2} \ref{Prop2.item3}.

If we integrate \eqref{labs} over $t^{\prime }\in (t_{0}^{\prime
},,t_{0}^{\prime }+\varepsilon ),$ divide by $\varepsilon $, and take the
limit $\varepsilon \rightarrow 0,$ then Proposition \ref{direction_trace}
implies that \eqref{labs} is valid for $t^{\prime }=t_{0}^{\prime }.$ By the
same way we can demonstrate the validity of \eqref{labs} for the point $%
t=t_{0}$. Combining these identifications with the integral representation
for $v_{k}$ above, we obtain Formula \eqref{abs} for all $[t^{\prime },t]$
in $\overline{\Omega _{\mathbf{y},k}}$.
\end{proof}

\section{Proof of the main result}

\label{sec:main}

We are now ready to prove our main result. We first recall a needed proposition. 
For a vector $\boldsymbol{\xi }=(\xi _{1},\dots ,\xi _{N})\in \mathbb{R}^{N}$
\ and each $i\in \left\{ 1,\ldots ,N\right\} $, we introduce the vectors 
\begin{equation}
\widehat{\boldsymbol{\xi }}_{i}=(\xi _{1},\dots ,\xi _{i-1},\xi
_{i+1},...,\xi _{N})\in \mathbb{R}^{N-1}.  \label{ksi}
\end{equation}%

For a proof of the following proposition, we refer to \cite{Temam}, pages 128-129,
Lemma 1.1.
\begin{proposition}
\label{l:productDecomp} Let $\theta _{i}=\theta _{i}(\widehat{\boldsymbol{%
\xi }}_{i})$ be non-negative integrable functions in $\mathbb{R}^{N-1}$, $%
i=1,\ldots ,N$. Then,
\begin{equation}
\int_{\mathbb{R}^{N}}\left( \prod_{i=1}^{N}\theta _{i}\right) ^{\frac{1}{N-1}%
}\,d\mathbf{\xi }\leq \prod_{i=1}^{N}\left( \int_{\mathbb{R}^{N-1}}\theta
_{i}(\widehat{\boldsymbol{\xi }}_{i})\,d\widehat{\boldsymbol{\xi }}%
_{i}\right) ^{\frac{1}{N-1}}.  \label{r3}
\end{equation}
\end{proposition}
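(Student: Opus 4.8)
The plan is to argue by induction on the dimension $N$, with the generalized Hölder inequality as the basic tool, reducing the $N$-dimensional estimate to the $(N-1)$-dimensional one. For the base case $N=2$ the left-hand side is $\int_{\mathbb{R}^2}\theta_1(\xi_2)\,\theta_2(\xi_1)\,d\boldsymbol{\xi}$, and since the two factors depend on disjoint variables, Fubini's theorem turns this into the product $\big(\int_{\mathbb{R}}\theta_1\,d\xi_2\big)\big(\int_{\mathbb{R}}\theta_2\,d\xi_1\big)$, which is exactly the right-hand side (indeed with equality). So the base case is immediate.

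For the inductive step I assume the inequality holds in dimension $N-1$. The first step is to integrate in the single variable $\xi_N$. Because $\theta_N$ depends on $\widehat{\boldsymbol{\xi}}_N=(\xi_1,\dots,\xi_{N-1})$ and hence not on $\xi_N$, it factors out of the $\xi_N$-integral, leaving $\int_{\mathbb{R}}\prod_{i=1}^{N-1}\theta_i^{1/(N-1)}\,d\xi_N$ inside. To this inner integral I would apply Hölder's inequality with the $N-1$ conjugate exponents all equal to $N-1$ (their reciprocals summing to $1$), obtaining the bound $\prod_{i=1}^{N-1}G_i^{1/(N-1)}$, where $G_i:=\int_{\mathbb{R}}\theta_i\,d\xi_N$ is now a function of the $N-2$ variables obtained from $(\xi_1,\dots,\xi_{N-1})$ by deleting $\xi_i$.

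The key maneuver is the next step, carried out on the remaining integral $\int_{\mathbb{R}^{N-1}}\theta_N^{1/(N-1)}\prod_{i=1}^{N-1}G_i^{1/(N-1)}\,d\widehat{\boldsymbol{\xi}}_N$. Here I would split off $\theta_N$ by a second application of Hölder, with exponent $N-1$ on the factor $\theta_N^{1/(N-1)}$ and conjugate exponent $(N-1)/(N-2)$ on the remaining product. The point of this precise choice is that it converts the exponent $1/(N-1)$ on each $G_i$ into $1/(N-2)$, which is exactly the exponent appearing in the inductive hypothesis in dimension $N-1$. Applying that hypothesis to $\int_{\mathbb{R}^{N-1}}\prod_{i=1}^{N-1}G_i^{1/(N-2)}\,d\widehat{\boldsymbol{\xi}}_N$ bounds it by $\prod_{i=1}^{N-1}\big(\int_{\mathbb{R}^{N-2}}G_i\big)^{1/(N-2)}$, and a final use of Fubini's theorem identifies $\int_{\mathbb{R}^{N-2}}G_i=\int_{\mathbb{R}^{N-1}}\theta_i$. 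Collecting the exponents then yields $\prod_{i=1}^{N}\big(\int_{\mathbb{R}^{N-1}}\theta_i\big)^{1/(N-1)}$, as desired.

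I expect the main obstacle to be precisely the bookkeeping of exponents in that second Hölder step: one must verify that the conjugate exponent $(N-1)/(N-2)$ is exactly what transforms the power $1/(N-1)$ into the power $1/(N-2)$ needed to invoke the induction hypothesis, and that the leftover exponents recombine to give the uniform power $1/(N-1)$ on the right-hand side. The remaining technical care is to check that each $G_i$ really is a function on $\mathbb{R}^{N-2}$ of the correct deleted-variable type, so that the induction hypothesis applies verbatim.
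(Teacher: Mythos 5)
Your argument is correct: the induction on $N$ with the generalized H\"older inequality (exponents all equal to $N-1$ in the $\xi_N$-integral, then the split $N-1$ versus $(N-1)/(N-2)$ to convert the powers on the $G_i$ into $1/(N-2)$) is exactly the classical proof of this lemma, and the exponent bookkeeping $\frac{1}{N-2}\cdot\frac{N-2}{N-1}=\frac{1}{N-1}$ closes as you expect. The paper itself gives no proof, deferring to Lemma 1.1 of Temam's book, where the argument is this same Gagliardo-type induction, so your route coincides with the referenced one.
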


The main result of this work is the following theorem.

\begin{theorem}
\label{t:maintheorem} Let $\Omega $ be a bounded open set with finite
perimeter. If $\mathbf{u}\in LD_{\gamma }^{p}(\Omega )$, then $\mathbf{u}\in
L^{\frac{pN}{(N-1)}}(\Omega )$ and there exists a positive constant $C$,
depending only on $N$, $p$ and the diameter of the domain $\Omega $, such
that%
\begin{equation}
\left\Vert \mathbf{u}\right\Vert _{L^{\frac{pN}{(N-1)}}(\Omega )}\leq
C\left\Vert \mathbf{u}\right\Vert _{LD_{\gamma }^{p}(\Omega )}.
\label{e:BDembedding}
\end{equation}
\end{theorem}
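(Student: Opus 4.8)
The plan is to adapt the classical Gagliardo--Nirenberg--Sobolev argument---whose heart is Proposition \ref{l:productDecomp}---to the setting of bounded deformation on a set of finite perimeter, using the absolute continuity on lines established in Proposition \ref{theorem0} together with the boundary control provided by Corollary \ref{cor}. The key technical advantage we have over a naive $LD^p$ estimate is that we may integrate the fundamental theorem of calculus all the way up to the essential boundary, so that the ``constant of integration'' is precisely a trace value whose $L^p$-norm is controlled by $\|\mathbf{u}\|_{LD_\gamma^p(\Omega)}$.

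\medskip

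First I would treat the case $p=1$ and fix the standard coordinate directions, taking $\mathbf{a}_k=\mathbf{e}_k$, $k=1,\dots,N$, so that $P_{\mathbf{e}_k}$ is the coordinate hyperplane $\{x_k=0\}$ and $\widehat{\boldsymbol{\xi}}_k$ records the variables transverse to the $k$th line. For $\mathcal{L}^{N-1}$-a.e.\ line in direction $\mathbf{e}_k$, Corollary \ref{cor} gives, for any $t$ in the section,
\begin{equation*}
v_k(t)=\gamma_k\mathbf{u}+\int_{t_0'}^{t}\mathbf{e}_k\,\mathbb{D}\mathbf{u}\cdot\mathbf{e}_k\,ds,
\end{equation*}
where $v_k(t)=u_k(\mathbf{x})$ is the $k$th component evaluated along the line and $\gamma_k\mathbf{u}=v_k(t_0')$ is a boundary trace value at the endpoint $\mathbf{y}+t_0'\mathbf{e}_k\in\partial^*\Omega$. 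Taking absolute values yields the pointwise bound
\begin{equation*}
|u_k(\mathbf{x})|\leq |\gamma_k\mathbf{u}|+\int_{\Omega_{\mathbf{y},k}}|\mathbb{D}\mathbf{u}|\,ds=:\theta_k(\widehat{\boldsymbol{\xi}}_k),
\end{equation*}
the crucial point being that the right-hand side depends only on the transverse variables $\widehat{\boldsymbol{\xi}}_k$. Since $|\mathbf{u}|\leq\sum_k|u_k|$ and hence $|\mathbf{u}|^{N/(N-1)}\leq C\prod_k\theta_k^{1/(N-1)}$, I would integrate over $\Omega$ (extending by the convention $\int_{\emptyset}=0$), invoke Proposition \ref{l:productDecomp} to get
\begin{equation*}
\int_\Omega|\mathbf{u}|^{\frac{N}{N-1}}\,d\mathbf{x}\leq C\prod_{k=1}^{N}\Bigl(\int_{\mathbb{R}^{N-1}}\theta_k\,d\widehat{\boldsymbol{\xi}}_k\Bigr)^{\frac{1}{N-1}},
\end{equation*}
and finally observe that each factor $\int\theta_k\,d\widehat{\boldsymbol{\xi}}_k$ splits into $\int_{\Omega}|\mathbb{D}\mathbf{u}|\,d\mathbf{x}$ plus a boundary integral of $|\gamma_k\mathbf{u}|$ over $\partial^*\Omega$, both bounded by $\|\mathbf{u}\|_{LD_\gamma^1(\Omega)}$. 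This gives the $p=1$ embedding $\|\mathbf{u}\|_{L^{N/(N-1)}}\leq C\|\mathbf{u}\|_{LD_\gamma^1}$.

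\medskip

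For general $1\leq p<\infty$ I would apply the $p=1$ estimate to a suitable power of $|\mathbf{u}|$. The standard device is to set $w=|\mathbf{u}|^{r}$ for an exponent $r>1$ to be chosen, estimate $\|w\|_{L^{N/(N-1)}}$ by the $p=1$ result, and use H\"older's inequality on the resulting first-order term, which produces a factor of $\|\mathbb{D}\mathbf{u}\|_{L^p}$ together with a lower power of $\|\mathbf{u}\|_{L^{pN/(N-1)}}$ that can be absorbed into the left-hand side after choosing $r=p(N-1)/(N-p)$ (so that the exponents match). I expect this interpolation/absorption bookkeeping to be the main obstacle, for two reasons. First, one must verify that $w=|\mathbf{u}|^r$ genuinely lies in $LD_\gamma^1$ and that its trace is $|\gamma\mathbf{u}|^r$, which requires a chain-rule/composition argument for $LD$-functions rather than merely $W^{1,1}$-functions; the cleanest route is to run the composition at the level of the mollified representatives $\mathbf{u}^n$ from the proof of Proposition \ref{theorem0}, where $w^n=|\mathbf{u}^n|^r$ is smooth and $\nabla w^n$ involves $\mathbb{D}\mathbf{u}^n$ only through the symmetric combination appearing in \eqref{abs}, and then pass to the limit. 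Second, the boundedness of $\Omega$ enters to control the full norm $\|\mathbf{u}\|_{LD_\gamma^p}$ (which contains only $\mathbb{D}\mathbf{u}$ and the trace, not $\mathbf{u}$ itself) and to ensure the constant $C$ depends only on $N$, $p$, and $\operatorname{diam}\Omega$; I would track the diameter dependence through the length of the integration intervals $\Omega_{\mathbf{y},k}$ and through the H\"older step. Once the composition is justified and the exponents are reconciled, the conclusion $\mathbf{u}\in L^{pN/(N-1)}(\Omega)$ with the stated bound \eqref{e:BDembedding} follows.
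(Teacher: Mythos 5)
Your plan founders at the very step that makes the bounded-deformation case harder than the classical $W^{1,1}$ Gagliardo--Nirenberg argument. Along a line in direction $\mathbf{e}_k$, Formula \eqref{abs} controls only the function $v_k(t)=\mathbf{e}_k\cdot\overline{\mathbf{u}}(\mathbf{y}+t\mathbf{e}_k)$, i.e.\ the $k$th \emph{component} $u_k$; so your bound $|u_k(\mathbf{x})|\leq\theta_k(\widehat{\boldsymbol{\xi}}_k)$ bounds a \emph{different} scalar function for each $k$. The classical product trick requires the \emph{same} quantity to be dominated by each $\theta_k$: from $|u(\mathbf{x})|\leq\theta_k$ for all $k$ one writes $|u|^{N/(N-1)}=\prod_k|u|^{1/(N-1)}\leq\prod_k\theta_k^{1/(N-1)}$. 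Here you only get $\prod_k|u_k|^{1/(N-1)}\leq\prod_k\theta_k^{1/(N-1)}$, which controls the geometric mean of the components, not $|\mathbf{u}|^{N/(N-1)}$; your claimed implication ``$|\mathbf{u}|\leq\sum_k|u_k|$ and hence $|\mathbf{u}|^{N/(N-1)}\leq C\prod_k\theta_k^{1/(N-1)}$'' is false (take $N=2$, $\theta_1=1$, $\theta_2=0$). This is precisely why the paper's proof does not stop at coordinate directions: in two dimensions it first obtains only the cross bound $\int_\Omega u^2v^2$, and must then repeat the argument in the rotated basis $\mathbf{a}_1=\tfrac{1}{\sqrt2}(1,1)$, $\mathbf{a}_2=\tfrac{1}{\sqrt2}(-1,1)$ to produce $\int_\Omega(u^4-2u^2v^2+v^4)$ and combine. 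In general dimension the paper works with $v_{\mathbf{a}}=\mathbf{a}\cdot\mathbf{u}$ for a direction $\mathbf{a}$ with all components nonzero, uses the decomposition $v_{\mathbf{a}}=v_{\mathbf{h}_k}+a_kv_{\mathbf{e}_k}$ to bound the single function $|v_{\mathbf{a}}|^p$ by $N$ quantities each depending on a different set of $N-1$ adapted coordinates, applies Proposition \ref{l:productDecomp} in the adapted basis $\{\mathbf{E}_k\}$, and finally varies $\mathbf{a}$ to recover each component $u_i$. None of this machinery appears in your proposal, and without it the core inequality does not close.

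The reduction of general $p$ to $p=1$ is also broken in this setting. The composition $w=|\mathbf{u}|^r$ has $\partial_k w=r|\mathbf{u}|^{r-2}\sum_j u_j\,\partial_k u_j$, which involves the off-diagonal entries $\partial_k u_j$, $j\neq k$; these are \emph{not} controlled by $\mathbb{D}\mathbf{u}$ (only the symmetric combinations $\partial_k u_j+\partial_j u_k$ are), so your assertion that $\nabla w^n$ involves $\mathbb{D}\mathbf{u}^n$ only through the combination in \eqref{abs} is incorrect, and $w$ need not lie in $W^{1,1}$ when $\mathbf{u}\in LD^p$. (Your target exponent $r=p(N-1)/(N-p)$ also aims at the Sobolev conjugate $pN/(N-p)$ rather than the stated $pN/(N-1)$, which is the weaker exponent one must accept precisely because Korn's inequality fails here.) The paper avoids the composition entirely: it applies H\"older's inequality on each line segment, $\bigl(\int_c^d|\mathbf{a}\mathbb{D}\mathbf{u}\cdot\mathbf{a}|\,ds\bigr)^p\leq(d-c)^{p-1}\int_c^d|\mathbb{D}\mathbf{u}|^p\,ds$, with $d-c\leq\operatorname{diam}\Omega$, so the whole argument runs directly at exponent $p$ and the diameter dependence of the constant in \eqref{e:BDembedding} enters exactly here. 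You would need to adopt both of these devices --- the rotated/adapted directions and the direct exponent-$p$ line estimate --- for the proof to go through.
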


\begin{proof}
We follow closely the proof in Theorem 1.2, page 117, of \cite{Temam} and
Theorem 6.95, pages 333-336, of \cite{DD}, and combine them with the ideas
developed in the theorem of Section 5, pages 218-220, in \cite{Volpert}. We
adapt this approach to the case at hand of sets with finite perimeter.

\textbf{(I)} As a warm-up for the general case, we start by considering two
space dimensions and $p=2$. Let $\left\{ \mathbf{e}_{1},\mathbf{e}%
_{2}\right\} $ be the Euclidean basis of $\mathbb{R}^{2}.$ We denote a point
in $\mathbb{R}^{2}$ with $\mathbf{x}=(x_{1},x_{2})$ and a vector field on $%
\mathbb{R}^{2}$ with $\mathbf{u}=(u,v)$.

\textit{Step 1:} Since the set $\Omega $ has a finite perimeter, then by
Part \ref{Prop2.item3} of Proposition \ref{Prop2} for $\mathcal{L}^{1}$-a.a. 
$x_{2}\in \mathbb{R}$, the intersection 
\begin{equation*}
\Omega (x_{2})=l_{\mathbf{e}_{1}}((0,x_{2}))\cap \Omega
\end{equation*}%
consists of a finite number $M_{2}(x_{2})$ of open intervals with disjoint
closures 
\begin{equation*}
\Omega (x_{2})=\cup _{l=1}^{M_{2}(x_{2})}\triangle _{2,l},\qquad \text{such
that}\quad \overline{\triangle _{2,l}}\cap \overline{\triangle _{2,m}}%
=\varnothing ,\quad \forall l\neq m,
\end{equation*}%
where $\triangle _{2,l}=(\mathbf{c}_{2,l}(x_{2}),\mathbf{d}_{2,l}(x_{2}))$
is a straight line connecting the points 
\begin{equation*}
\mathbf{c}_{2,l}(x_{2})=(c_{2,l}(x_{2}),x_{2}),\mathbf{\ \ d}%
_{2,l}(x_{2})=(d_{2,l}(x_{2}),x_{2})\in \partial ^{\ast }\Omega .
\end{equation*}%
Consequently, Corollary \ref{cor} implies that for such admissible $x_{2}\in 
\mathbb{R}$ and arbitrary chosen $x_{1}\in \Omega (x_{2})$, there exists an
index $k\in \left\{ 1,\ldots ,M_{2}(x_{2})\right\} $, such that $x_{1}\in
\triangle _{2,k}$ and 
\begin{equation*}
u\mathbf{(x)}=u(x_{1},x_{2})=\gamma u(\mathbf{c}_{2,k}(x_{2}))+%
\int_{c_{2,k}(x_{2})}^{x_{1}}\partial _{x_{1}}u(s,x_{2})\ ds.
\end{equation*}%
It follows that 
\begin{align}
u^{2}\mathbf{(x)}& \leq 2\left[ |\gamma u(\mathbf{c}%
_{2,k}(x_{2}))|^{2}+(d_{2,k}(x_{2})-c_{2,k}(x_{2}))\cdot \right.  \notag \\
& \left. \qquad \qquad \qquad \qquad \qquad \cdot
\int_{c_{2,k}(x_{2})}^{d_{2,k}(x_{2})}|\partial _{x_{1}}u(s,x_{2})|^{2}\ ds 
\right]  \notag \\
& \leq C\sum\limits_{l=1}^{M_{2}(x_{2})}\left[ |\gamma \mathbf{u}(\mathbf{c}%
_{2,l}(x_{2}))|^{2}+\int_{c_{2,l}(x_{2})}^{d_{2,l}(x_{2})}|d_{11}(\mathbf{u}%
)(s,x_{2})|^{2}\ ds\right]  \notag \\
& =f_{2}(x_{2})  \label{e:uSquareBound}
\end{align}%
where the constant $C$ depends only on the diameter of $\Omega .$

In the same fashion, for $\mathcal{L}^{1}$-a.a. $x_{1}\in \mathbb{R}$ \ the
intersection 
\begin{equation*}
\Omega (x_{1})=l_{\mathbf{e}_{2}}((x_{1},0))\cap \Omega
\end{equation*}%
consists of a finite number $M_{1}(x_{1})$ of open intervals with disjoint
closures 
\begin{equation*}
\Omega (x_{1})=\cup _{l=1}^{M_{1}(x_{1})}\triangle _{1,l},\qquad \text{such
that}\quad \overline{\triangle _{1,l}}\cap \overline{\triangle _{1,m}}%
=\varnothing ,\quad \forall l\neq m,
\end{equation*}%
where $\triangle _{1,l}=(\mathbf{c}_{1,l}(x_{1}),\mathbf{d}_{1,l}(x_{1}))$
is a straight line connecting the points 
\begin{equation*}
\mathbf{c}_{1,l}(x_{1})=(x_{1},c_{1,l}(x_{1})),\mathbf{\ \ d}%
_{1,l}(x_{1})=(x_{1},d_{1,l}(x_{1}))\in \partial ^{\ast }\Omega .
\end{equation*}%
For admissible $x_{1}\in \mathbb{R}$ and arbitrary chosen $x_{2}\in \Omega
(x_{1})$, there exists an index $k\in \left\{ 1,\ldots ,M_{1}(x_{1})\right\}
,$ such that $x_{2}\in \triangle _{1,k}$ and 
\begin{equation*}
v\mathbf{(x)}=v(x_{1},x_{2})=\gamma v(\mathbf{c}_{1,k}(x_{1}))+%
\int_{c_{1,k}(x_{1})}^{x_{2}}\partial _{x_{2}}v(x_{1},s)\ ds.
\end{equation*}%
Hence 
\begin{eqnarray}
v^{2}\mathbf{(x)} &\leq &C\sum\limits_{l=1}^{M_{1}(x_{1})}\left[ |\gamma 
\mathbf{u}(\mathbf{c}_{1,l}(x_{1}))|^{2}+%
\int_{c_{1,l}(x_{1})}^{d_{1,l}(x_{1})}|d_{22}(\mathbf{u})(x_{1},s)|^{2}\ ds%
\right]  \notag \\
&=&f_{1}(x_{1}).  \label{e:vSquareBound}
\end{eqnarray}

Multiplying \eqref{e:uSquareBound} with \eqref{e:vSquareBound} and
integrating over $\Omega ,$ by Proposition \ref{l:productDecomp} (or simply
by the Fubini-Tonelli Theorem) we obtain%
\begin{equation*}
\int_{\Omega }u^{2}\mathbf{(x)}v^{2}\mathbf{(x)}\ d\mathbf{x}\leq
\int_{\Omega }f_{1}(x_{1})f_{2}(x_{2})\ d\mathbf{x}\leq
\int_{I_{1}}f_{1}(x_{1})\ dx_{1}\int_{I_{2}}f_{2}(x_{2})\ dx_{2},
\end{equation*}%
where $I_{i},$ $i=1,2,$ are the projections of $\Omega $ onto the $x_{i}$%
-coordinate axis. We have 
\begin{equation*}
\left. 
\begin{array}{c}
\displaystyle{\int_{I_{1}}\sum\limits_{l=1}^{M_{1}(x_{1})}|\gamma \mathbf{u}(%
\mathbf{c}_{1,l}(x_{1}))|^{2}\ dx_{1}} \\ 
\\ 
\displaystyle{\int_{I_{2}}\sum\limits_{l=1}^{M_{2}(x_{2})}|\gamma \mathbf{u}(%
\mathbf{c}_{2,l}(x_{2}))|^{2}\ dx_{2}}%
\end{array}%
\right\} \leq \int_{\partial ^{\ast }\Omega }|\gamma (\mathbf{u})\mathbf{|}%
^{2}\,d\mathcal{H}^{N-1}(\mathbf{x})
\end{equation*}%
by the properties of the measure $\lambda _{\mathbf{a}}$ given on the pages
235-236, section 7, of \cite{Volpert2}. Therefore, 
\begin{align}
\int_{\Omega }u^{2}\mathbf{(x)}v^{2}\mathbf{(x)}\,d\mathbf{x}& \leq
C\,\int_{I_{2}}f_{1}(x_{1})\ dx_{1}\int_{I_{1}}f_{2}(x_{2})\ dx_{2}  \notag
\\
& \leq C\left( \int_{\partial ^{\ast }\Omega }|\gamma (\mathbf{u})\mathbf{|}%
^{2}\,d\mathcal{H}^{N-1}(\mathbf{x})+\int_{\Omega }|\mathbb{D}\mathbf{u|}%
^{2}\,d\mathbf{x}\right) ^{2}=C\left\Vert \mathbf{u}\right\Vert _{LD_{\gamma
}^{2}(\Omega )}^{4}.  \label{e:CrossProductBound}
\end{align}

\textit{2nd step:} Now we consider the basis \ $\mathbf{a}_{1}=\frac{1}{%
\sqrt{2}}(1,1),$\ \ $\mathbf{a}_{2}=\frac{1}{\sqrt{2}}(-1,1).$ We denote the
coordinates of $\mathbf{x}$\ in the basis $(\mathbf{a}_{1},\mathbf{a}_{2})$
by $(\xi _{1},\xi _{2}),$ that is, 
\begin{equation*}
\mathbf{x}=(\mathbf{x}_{1},\mathbf{x}_{2})=\xi _{1}\mathbf{a}_{1}+\xi _{2}%
\mathbf{a}_{2}.
\end{equation*}

Again, for $\mathcal{L}^{1}$- a.e. $\mathbf{y}_{2}=\xi _{2}\mathbf{a}_{2}\in
P_{\mathbf{a}_{1}},$ i.e., for $\mathcal{L}^{1}$- a.e. $\xi _{2}\in \mathbb{R%
}$, the intersection of lines parallel to $\mathbf{a}_{1}$ with the domain $%
\Omega ,$ passing through $\mathbf{y}_{2},$%
\begin{equation*}
\Omega (\xi _{2})=l_{\mathbf{a}_{1}}(\mathbf{y}_{2})\cap \Omega
\end{equation*}%
consists of a finite number $M_{2}(\xi _{2})$ of open intervals with
disjoint closures, such that for$\ \mathbf{x}\in \Omega (\xi _{2}),$\ there
exists an interval%
\begin{equation*}
(\mathbf{c}_{2,k}(\xi _{2}),\mathbf{d}_{2,k}(\xi _{2}))\subset \Omega (\xi
_{2})\qquad \text{with\quad }\mathbf{c}_{2,k}(\xi _{2}),\mathbf{d}_{2,k}(\xi
_{2})\in \partial ^{\ast }\Omega .
\end{equation*}%
For simplicity of notation, we assume that this interval, being a part of $%
l_{\mathbf{a}_{1}}(\mathbf{y}_{2}),$ is described as 
\begin{equation*}
(\mathbf{c}_{2,k}(\xi _{2}),\mathbf{d}_{2,k}(\xi _{2}))=\left\{ \,\mathbf{y}=%
\mathbf{y}_{2}+s\mathbf{a}_{1}\in \mathbb{R}^{N}:~s\in (c_{2,k}(\xi
_{2}),d_{2,k}(\xi _{2}))\right\} .
\end{equation*}

By applying Corollary \ref{cor} to the function 
\begin{equation*}
v_{2}(\xi _{1},\xi _{2})=\mathbf{a}_{1}\,\cdot \mathbf{u}(\mathbf{y}_{2}+\xi
_{1}\mathbf{a}_{1})
\end{equation*}%
and proceeding as in \eqref{e:uSquareBound}-\eqref{e:vSquareBound}, we obtain%
\begin{align}
v_{2}^{2}& \leq 2\left[ |\mathbf{a}_{1}\,\cdot \gamma \mathbf{u}(\mathbf{c}%
_{2,k}(\xi _{2}))|^{2}\right.  \notag \\
& \qquad \left. +(d_{2,k}(\xi _{2})-c_{2,k}(\xi _{2}))\int_{c_{2,k}(\xi
_{2})}^{d_{2,k}(\xi _{2})}|\mathbf{a}_{2}\mathbb{D}\mathbf{u}(\mathbf{y}%
_{2}+s\mathbf{a}_{1})\cdot \mathbf{a}_{2}|^{2}\ ds\right]  \notag \\
& \leq \sum\limits_{l=1}^{M_{2}(\xi _{2})}\left[ |\gamma \mathbf{u}(\mathbf{c%
}_{2,l}(\xi _{2}))|^{2}+\int_{c_{2,l}(\xi _{2})}^{d_{2,l}(\xi _{2})}|\mathbb{%
D}\mathbf{u}(\mathbf{y}_{2}+s\mathbf{a}_{1})|^{2}\ ds\right]  \notag \\
& =f_{2}(\xi _{2}).  \label{v2}
\end{align}%
Similarly, for $\mathcal{L}^{1}$- a.e. $\mathbf{y}_{1}=\xi _{1}\mathbf{a}%
_{1}\in P_{\mathbf{a}_{2}},$ that is, or $\mathcal{L}^{1}$- a.e. $\xi
_{1}\in \mathbb{R}$, \ the intersection of the line parallel to $\mathbf{a}%
_{2}$ with $\Omega $ passing through $\mathbf{y}_{1}$ 
\begin{equation*}
\Omega (\xi _{1})=l_{\mathbf{a}_{2}}(\mathbf{y}_{1})\cap \Omega
\end{equation*}%
is a finite number $M_{1}(\xi _{1})$ of open intervals with disjoint
closures. Then for$\ \mathbf{x}\in \Omega (\xi _{1})$ there exists an
interval, such that 
\begin{equation*}
(\mathbf{c}_{1,k}(\xi _{1}),\mathbf{d}_{1,k}(\xi _{1}))\subset \Omega (\xi
_{1})\qquad \text{with\quad }\mathbf{c}_{1,k}(\xi _{1}),\mathbf{d}_{1,k}(\xi
_{1})\in \partial ^{\ast }\Omega .
\end{equation*}%
Defining 
\begin{equation*}
v_{1}(\xi _{1},\xi _{2})=\mathbf{a}_{1}\,\cdot \mathbf{u}(\mathbf{y}_{1}+\xi
_{2}\mathbf{a}_{2}),
\end{equation*}%
Corollary \ref{cor} gives 
\begin{equation*}
v_{1}^{2}\leq C\sum\limits_{l=1}^{M_{1}(\xi _{1})}\left[ |\gamma \mathbf{u}(%
\mathbf{c}_{1,l}(\xi _{1}))|^{2}+\int_{c_{1,l}(\xi _{1})}^{d_{1,l}(\xi
_{1})}|\mathbb{D}\mathbf{u}(\mathbf{y}_{2}+s\mathbf{a}_{1})|^{2}\ ds\right]
=f_{1}(\xi _{1}).
\end{equation*}%
Multiplying this inequality by \eqref{v2}, integrating over $(\xi _{1},\xi
_{2})\in \Omega $, and proceeding as in the derivation of Equation %
\eqref{e:CrossProductBound}, yields 
\begin{equation*}
\int_{\Omega }v_{2}^{2}v_{1}^{2}\ d\xi _{1}d\xi _{2}\leq C\left\Vert \mathbf{%
u}\right\Vert _{LD_{\gamma }^{2}(\Omega )}^{2}.
\end{equation*}%
Observing that%
\begin{eqnarray*}
\int_{\Omega }v_{2}^{2}v_{1}^{2}\ d\xi _{1}d\xi _{2} &=&\int_{\Omega }(%
\mathbf{a}_{1}\cdot \mathbf{u}(\mathbf{x}))^{2}(\mathbf{a}_{2}\cdot \mathbf{u%
}(\mathbf{x}))^{2}\ d\mathbf{x} \\
&=&\frac{1}{4}\int_{\Omega }(u-v)^{2}(u+v)^{2}\ d\mathbf{x,}
\end{eqnarray*}%
we obtain 
\begin{equation*}
\int_{\Omega }(u^{4}-2u^{2}v^{2}+v^{4})\ d\mathbf{x}\leq C\left\Vert \mathbf{%
u}\right\Vert _{LD_{\gamma }^{2}(\Omega )}^{2}.
\end{equation*}%
Therefore, by combining this estimate with estimate ({\ref%
{e:CrossProductBound})} we conclude that 
\begin{equation*}
\Vert \mathbf{u}\Vert _{L^{4}(\Omega )}^{4}=\int_{\Omega }(u^{4}+v^{4})\ d%
\mathbf{x}\leq C\left\Vert \mathbf{u}\right\Vert _{LD_{\gamma }^{2}(\Omega
)}^{4}.
\end{equation*}%
which coincides with \eqref{e:BDembedding} for $N=2$ and $p=2.$

\bigskip

\textbf{(II)} We now turn to the general $N$-dimensional case$.$ \ We follow
closely the proof of Theorem 6.95, pages 333-336, of \cite{DD}.

We denote the Euclidean basis of $\mathbb{R}^{N}$ by $\{\mathbf{e}%
_{i}\}_{i=1}^{N}$ . \ Given a vector $\mathbf{a}\in \mathbb{S}^{N-1}$ and a
point $\mathbf{x}\in \Omega $, we let $\mathbf{y}=\text{Proj}_{\mathbf{a}}%
\mathbf{x}\in P_{\mathbf{a}}$ be the projection of $\mathbf{x}$ on the plane 
$P_{\mathbf{a}}$, and we let 
\begin{equation*}
\Omega _{\mathbf{a}}(\mathbf{y})=l_{\mathbf{a}}(\mathbf{y})\cap \Omega
\end{equation*}%
be the intersection of $\Omega $ with the line parallel to $\mathbf{a}$ and
crossing $\mathbf{y}$ $($and $\mathbf{x).}$\ 

Since $\Omega $ is, by hypothesis, a set of finite perimeter, for $\mathcal{L%
}^{N-1}-$a.a. $\mathbf{y}=\text{Proj}_{\mathbf{a}}\mathbf{x}\in P_{\mathbf{a}}$,$\
\ \ \Omega _{\mathbf{a}}(\mathbf{y})$ is a finite number $M_{\mathbf{a}}(%
\mathbf{y})$ of open intervals with disjoint closures. Consequently, for $%
\mathcal{L}^{N-1}-$a.a. $\mathbf{y}=\text{Proj}_{\mathbf{a}}\mathbf{x}\in P_{%
\mathbf{a}}$, the point $\mathbf{x}$\ belongs to one of these intervals and
its endpoints, which we denote by $\mathbf{c}_{k}(\mathbf{x})$, $\mathbf{d}%
_{k}(\mathbf{x})$ are on the essential boundary $\partial ^{\ast }\Omega $
of $\Omega $. For simplicity of notation, we assume that this interval is
described as 
\begin{equation*}
(\mathbf{c}_{\mathbf{a},k}(\mathbf{y}),\mathbf{d}_{\mathbf{a},k}(\mathbf{y}%
))=\left\{ \,\mathbf{x}\in \mathbb{R}^{N}:~\mathbf{x}=\mathbf{y}+t\mathbf{a},%
\mathbf{\qquad }t\in (c_{\mathbf{a},k}(\mathbf{y}),d_{\mathbf{a},k}(\mathbf{y%
}))\right\} .
\end{equation*}%
If we consider the function%
\begin{equation*}
v_{\mathbf{a}}(\mathbf{x})=\mathbf{a}\cdot \mathbf{u}(\mathbf{x}%
)=\sum_{i=1}^{N}a_{i}u_{i}(\mathbf{x}),
\end{equation*}%
then Corollary \ref{cor} implies that%
\begin{equation*}
|v_{\mathbf{a}}(\mathbf{x})|\leq |\gamma (v_{\mathbf{a}})(\mathbf{c}_{%
\mathbf{a},k}(\mathbf{y}))|+\int_{c_{\mathbf{a},k}(\mathbf{y})}^{d_{\mathbf{a%
},k}(\mathbf{y})}|\mathbf{a}\mathbb{D}\mathbf{u}(\mathbf{y}+s\mathbf{a}%
)\cdot \mathbf{a|}\,\ ds.
\end{equation*}%
We have the elementary inequalities%
\begin{equation*}
|\gamma (v_{\mathbf{a}})|\leq C|\gamma \mathbf{u}|,\qquad |\mathbf{a}\mathbb{%
D}\mathbf{u}(\mathbf{y}+s\mathbf{a})\cdot \mathbf{a}|\leq C|\mathbb{D}(%
\mathbf{u})|.
\end{equation*}%
Here and below $C$ are constants depending only on $N,$ $p$\ and the
diameter of $\Omega .$ Therefore,
\begin{eqnarray}
|v_{\mathbf{a}}(\mathbf{x})|^{p} &\leq &C\sum_{l=1}^{M_{\mathbf{a}}(\mathbf{y%
})}\left( |\gamma \mathbf{u}(\mathbf{c}_{\mathbf{a},l}(\mathbf{y}%
))|^{p}+\int_{c_{\mathbf{a},l}(\mathbf{y})}^{d_{\mathbf{a},l}(\mathbf{y})}|%
\mathbb{D}(\mathbf{u})(\mathbf{y}+s\,\mathbf{a})|^{p}\ ds\right)  \notag \\
&=&H_{N}(\mathbf{u})(\mathbf{y}),  \label{ineq1}
\end{eqnarray}%
for $\mathcal{L}^{N-1}-$a.a. $\mathbf{y}=\text{Proj}_{\mathbf{a}}\mathbf{x}\in P_{%
\mathbf{a}}.$

We introduce the orthonormal projections%
\begin{equation*}
\mathbf{h}_{k}=\frac{\mathbf{a}-a_{k}\,\mathbf{e}_{k}}{|\mathbf{a}-a_{k}\,%
\mathbf{e}_{k}|}\qquad \text{for each }k=1,2,...,N-1
\end{equation*}%
of the vector $\mathbf{a}$ onto the coordinates hyperplanes, identified
canonically with $\mathbb{R}^{N-1}$. As in \eqref{ineq1}, for a fixed $k\in
\{1,...,N-1\}$ \ the function $v_{\mathbf{h}_{k}}(\mathbf{x})=\mathbf{h}%
_{k}\cdot \mathbf{u}(\mathbf{x})$ satisfies the inequality 
\begin{eqnarray}
|v_{\mathbf{h}_{k}}(\mathbf{x})|^{p} &\leq &C\sum_{l=1}^{M_{\mathbf{h}_{k}}(%
\mathbf{y}^{\prime })}\left( |\gamma \mathbf{u}(\mathbf{c}_{\mathbf{h}%
_{k},l}(\mathbf{y}^{\prime }))|^{p}+\int_{c_{\mathbf{h}_{k},l}(\mathbf{y}%
^{\prime })}^{d_{\mathbf{h}_{k},l}(\mathbf{y}^{\prime })}|\mathbb{D}(\mathbf{%
u})(\mathbf{y}^{\prime }+s\,\mathbf{h}_{k})|^{p}\ ds\right)  \notag \\
&=&I_{k}(\mathbf{u})(\mathbf{y}^{\prime }),  \label{ineq2}
\end{eqnarray}%
for $\mathcal{L}^{N-1}-$a.a. $\mathbf{y}^{\prime }=\text{Proj}_{\mathbf{h}%
_{k}}\mathbf{x}\in P_{\mathbf{h}_{k}}$. Similarly, the function $v_{\mathbf{e%
}_{k}}(\mathbf{x})=\mathbf{e}_{k}\cdot \mathbf{u}(\mathbf{x})$ satisfies 
\begin{eqnarray}
|v_{\mathbf{e}_{k}}(\mathbf{x})|^{p} &\leq &C\sum_{l=1}^{M_{\mathbf{e}_{k}}(%
\mathbf{y}^{\prime \prime })}\left( |\gamma \mathbf{u}(\mathbf{c}_{\mathbf{e}%
_{k},l}(\mathbf{y}^{\prime \prime }))|^{p}+\int_{c_{\mathbf{e}_{k},l}(%
\mathbf{y}^{\prime \prime })}^{d_{\mathbf{e}_{k},l}(\mathbf{y}^{\prime
\prime })}|\mathbb{D}(\mathbf{u})(\mathbf{y}^{\prime \prime }+s\,\mathbf{e}%
_{k})|^{p}\ ds\right)  \notag \\
&=&J_{k}(\mathbf{u})(\mathbf{y}^{\prime \prime }),  \label{ineq3}
\end{eqnarray}%
for $\mathcal{L}^{N-1}-$a.a. $\mathbf{y}^{\prime \prime }=\text{Proj}_{%
\mathbf{e}_{k}}\mathbf{x}\in P_{\mathbf{e}_{k}}.$ \ Keeping $k$ fixed, it
follows that 
\begin{equation*}
v_{\mathbf{a}}(\mathbf{x})=\sum_{i=1}^{N}a_{i}u_{i}(\mathbf{x})=v_{\mathbf{h}%
_{k}}(\mathbf{x})+a_{k}v_{\mathbf{e}_{k}}(\mathbf{x}).
\end{equation*}%
Consequently,%
\begin{equation}
|v_{\mathbf{a}}(\mathbf{x})|^{p}\leq C\left[ I_{k}(\mathbf{u})(\mathbf{y}%
^{\prime })+J_{k}(\mathbf{u})(\mathbf{y}^{\prime \prime })\right] .
\label{e:ModvNuRep2}
\end{equation}%
We next use estimates \eqref{ineq1}-\eqref{ineq3} to bound%
\begin{equation*}
|v_{\mathbf{a}}(\mathbf{x})|^{pN}\leq C\,H_{N}(\mathbf{u})\,\prod_{k=1}^{N-1}%
\left[ I_{k}(\mathbf{u})+J_{k}(\mathbf{u})\right] .
\end{equation*}

Using this inequality and the elementary bound 
\begin{equation*}
\left( \alpha _{1}+...+\alpha _{n}\right) ^{1/(N-1)}\leq n^{1/(N-1)}(\alpha
_{1}^{1/(N-1)}+...+\alpha _{n}^{1/(N-1)}),
\end{equation*}%
which is valid for any positive $\alpha _{1},...,\alpha _{n}$ and for any $%
n\in \mathbb{N}$\ (in particular for $n=2^{N-1}$), one can show that $|v_{%
\mathbf{a}}(\mathbf{x})|^{pN/(N-1)}$ is bounded by a linear combination of $%
2^{N-1}$ terms of the form%
\begin{equation}
I_{\sigma }=\left( H_{1}\ldots H_{N}\right) ^{1/(N-1)},  \label{terms}
\end{equation}%
where $H_{k}$ denotes either $I_{k}$ or $J_{k}$.

Each of the terms $H_{k}$ in the product above depends on $N-1$ variables,
and hence we can apply Proposition \ref{l:productDecomp}. To see this fact,
we introduce an adapted basis $\{\mathbf{E}_{k}\}_{k=1}^{N}$ as follows. For
each index $k\in \{1,\ldots ,N-1\}$, we pick a vector $\mathbf{E}_{k}$
belonging to the set $\left\{ \mathbf{h}_{k},\mathbf{e}_{k}\right\} $, and
for $k=N$, we set $\mathbf{E}_{N}=\mathbf{a}.$ If all components of the
vector $\mathbf{a}$ are non zero, it is then easy to see that 
\begin{equation*}
\{\mathbf{E}_{k}\}_{k=1}^{N-1}\text{\quad is a basis of }\mathbb{R}^{N-1}%
\text{\qquad and\qquad }\{\mathbf{E}_{k}\}_{k=1}^{N}\text{\quad is a basis
of }\mathbb{R}^{N}.
\end{equation*}%
For a proof of this fact we refer to Lemma 6.96, page 334-335, of \cite{DD}.
We let $\xi _{j},$ $j=1,...,N,$\ denote the coordinates of $\mathbf{x}\in 
\mathbb{R}^{N}$ in the basis $\mathbf{E}_{1},...,\mathbf{E}_{N}$, that is, 
\begin{equation*}
\mathbf{x}=\sum_{j=1}^{N}x_{i}\mathbf{e}_{_{j}}=\sum_{j=1}^{N}\xi _{j}\,%
\mathbf{E}_{j}
\end{equation*}%
and identify $\mathbf{x}$ with the vector $\boldsymbol{\xi }%
=\sum_{j=1}^{N}\xi _{j}\,\mathbf{E}_{j}.$

Then, each term $I_{\sigma }$\ can be rewritten as 
\begin{equation*}
(I_{\sigma }(\mathbf{x}(\boldsymbol{\xi })))^{N-1}=\prod_{k=1}^{N}\theta
_{k}(\widehat{\mathbf{\xi }}_{k}),  \; \quad \theta _{k}(%
\widehat{\boldsymbol{\xi }}_{k})=H_{k}(\text{Proj}_{\mathbf{E}_{k}}\mathbf{x}(%
\boldsymbol{\xi })), \; k=1,...,N.
\end{equation*}
Proceeding as in the derivation of \eqref{e:CrossProductBound} gives 
\begin{equation*}
\int_{\mathbb{R}^{N-1}}\theta _{k}(\widehat{\boldsymbol{\xi }}_{k})\,d%
\widehat{\boldsymbol{\xi }}_{k}\leq \left\Vert \mathbf{u}\right\Vert
_{LD_{\gamma }^{p}(\Omega )}^{p}.
\end{equation*}%
By Proposition \ref{l:productDecomp} it follows that 
\begin{equation}
\int_{\Omega }I_{\sigma }(\mathbf{x})\,d\mathbf{x}\leq C(\sigma
)\,\prod_{i=1}^{N}\left( \int_{\mathbb{R}^{N-1}}\theta _{k}\,d\widehat{%
\boldsymbol{\xi }}_{k}\right) ^{\frac{1}{N-1}}\leq C\,\left\Vert \mathbf{u}%
\right\Vert _{LD_{\gamma }^{p}(\Omega )}^{\frac{pN}{N-1}},  \label{e:NdimEst}
\end{equation}%
where the dependence on $\sigma $ in the constant $C$ comes from the
Jacobian of the change of variables from $\mathbf{x}$ to $\boldsymbol{\xi }$%
. Next, the integration over $\Omega $ \ of $\ |v_{\mathbf{a}}(\mathbf{x}%
)|^{pN/(N-1)},$ which is a linear combination of $2^{N-1}$ terms of the form
given in \eqref{terms}, yields%
\begin{equation*}
\int_{\Omega }|v_{\mathbf{a}}(\mathbf{x})|^{pN/(N-1)}\,d\mathbf{x}\leq
C\,\left\Vert \mathbf{u}\right\Vert _{LD_{\gamma }^{2}(\Omega )}^{\frac{pN}{%
N-1}}.
\end{equation*}

Lastly, we observe that, since $\mathbf{a}$ can be chosen arbitrarily away
from the coordinate planes, by varying $\mathbf{a}$ we can bound $\Vert
u_{i}\Vert _{L^{pN/(N-1)}}$ for each component $u_{i}$ of $\mathbf{u}$ as
exemplified in the two-dimensional case. For example, choosing $\mathbf{a}=%
\frac{1}{\sqrt{N}}(1,\ldots ,1)$ first and the $\Bar{\mathbf{a}}=\frac{1}{%
\sqrt{N}}(1,\ldots ,-1,\dots ,1),$ where $-1$ is in the $i$-th component,
gives a bound on%
\begin{equation*}
\Vert u_{i}\Vert _{L^{pN/(N-1)}(\Omega )}=\frac{\sqrt{N}}{2}\Vert v_{\mathbf{%
a}}-v_{\Bar{\mathbf{a}}}\Vert _{L^{pN/(N-1)}(\Omega )}.
\end{equation*}%
We conclude that estimate \eqref{e:BDembedding} holds.
\end{proof}

\bigskip

\begin{remark}
We make some final observations. The embedding of Theorem \ref{t:maintheorem}
is an analog of the embedding\ $W_{0}^{1,p}(\Omega )\hookrightarrow
L^{q}(\Omega )$ \ for $q=\frac{pN}{(N-p)},$ which is valid for arbitrary
open set (see e.g. \cite[Theorem 4.1.1., page 177]{Zi}). By comparison, we
allow for non-zero trace values at the boundary, but we require minimum
regularity on the boundary of the domain (i.e., finite perimeter) in order
to define and control the trace. Indeed, Theorem \ref{t:maintheorem} shows
that elements of the space $\,LD_{\gamma }^{p}(\Omega )\,$ have less
integrability than those in $W_{0}^{1,p}(\Omega )\,$, as it is expected
because Korn's inequality does not generally hold on domains with finite
perimeter (see, for example, \cite{Acosta3} for domains with cusps).
\end{remark}


\begin{thebibliography}{99}
\bibitem{ad} \textsc{Adams R..} Sobolev spaces. 
\newblock {\em
Boston, MA: Academic Press,} 1975.

\bibitem{Acosta2} \textsc{Acosta G., Armentano M. G., Dur\'{a}n R. G.,
Lombardi A. L..} Non-homogeneous Neumann problem for the Poisson equation in
domains with an external cusp. \newblock {\em J. Math. Anal.
Appl.,} \textbf{310} (2005) 397-411.

\bibitem{Acosta3} \textsc{Acosta G., Dur\'an R. G., L\'opez Garc\'ia F.},
Korn inequality and divergence operator: counterexamples and optimality of
weighted estimates. \emph{Proc. Amer. Math. Soc.,} \textbf{141}, 1 (2013)
217---232.

\bibitem{Ambr} \textsc{Ambrosio L., Fusco N., Pallara D..} Functions of
bounded variation and free discontinuity problems. 
\newblock {\em
Oxford Science publications, Clarendon press, Oxford, } 2000.

\bibitem{AMT} \textsc{Ambrosio L., S. Mortola, Tortorelli V. M..}
Functionals with linear growth defined on vector valued BV functions.\emph{\ 
}\newblock {\em
J. Math. Pures et Appt. } 70 (1991), 269- 323.

\bibitem{bab} \textsc{Babadjian J.F..} Traces of functions of bounded
deformation. 
\newblock {\em ArXiv preprint, 
https://arxiv.org/abs/1308.5497.} (2013) 1-12.

\bibitem{Besov} \textsc{Besov O.V..} Integral estimates for differentiable
functions on irregular domains. \newblock {\em Doklady
Mathematics,} \textbf{1} (2010) 87--90 (published in 
\newblock {\em Doklady
Academii Nauk,} \textbf{430}, 5 (2010) 583-585).

\bibitem{BCM} \textsc{Bost C., Cottet G.-H., Maitre E.,} Convergence
analysis of a penalization method for the three-dimensional motion of a
rigid body in an incompressible viscous fluid. 
\newblock {\em SIAM J.
Numer. Anal.,} \textbf{48}(4) (2010) 1313-1337.

\bibitem{cnn} \textsc{Chemetov N.V., Ne\v{c}asov\'{ }a \v{S}..} The motion
of the rigid body in viscous fluid including collisions. Global solvability
result. \newblock {\em
Nonlinear Analysis: Real World Applications.} \textbf{34} (2017) 416--445.

\bibitem{DD} \textsc{Demengel F., Demengel F..} Functional spaces for the
theory of elliptic partial differential equations. 
\newblock {\em CRC Press,
} 1991.

\bibitem{EG} \textsc{Evans L.C., Gariepy R.E..} Measure theory and fine
properties of functions. \newblock {\em CRC Press,  } 1991.

\bibitem{Fe} \textsc{Federer H.,} Geometric measure theory. 
\newblock {\em
Springer,  } 1969.

\bibitem{FHN} \textsc{Feireisl E., Hillairet M., Ne\v{c}asov\'{ }a \v{S}..}
On the motion of several rigid bodies in an incompressible non-Newtonian
fluid. \newblock {\em Nonlinearity,} \textbf{21} (2008) 1349--1366.

\bibitem{GH2} \textsc{G\'{e}rard-Varet D., Hillairet M..} Existence of weak
solutions up to collision for viscous fluid-solid systems with slip. %
\newblock {\em Comm. Pure Appl. Math.,} \textbf{67}(12) (2014) 2022--2075.

\bibitem{GHC} \textsc{G\'{e}rard--Varet D., Hillairet M., Wang C..} The
influence of boundary conditions on the contact problem in a 3D
Navier-Stokes flow. \newblock {\em J. Math. Pures Appl.,} \textbf{103}(1)
(2015) 1--38.

\bibitem{GLS} \textsc{Gunzburger M.D., Lee H.-C., Seregin G.A..} Global
existence of weak solutions for viscous incompressible flows around a moving
rigid body in three dimensions. \newblock {\em J. math. fluid
mech.,} \textbf{2} (2000) 219--266.

\bibitem{Gi} \textsc{Giusti E..}\emph{\ }Minimal surfaces and functions of
bounded variation. \newblock{\em   Birkhttuser, } 1984.

\bibitem{Gr} \textsc{Grisvard P.}. Problemes aux limites dans des domaines
avec points de rebroussement. \newblock{\em  Ann. Fac. Sci.
Toulouse,} \textbf{4} (1995) 561-578.

\bibitem{HES} \textsc{Hesla T.I..} Collision of smooth bodies in a viscous
fluid: {A} mathematical investigation. \newblock2005. \newblock PhD Thesis -
Minnesota.

\bibitem{HIL} \textsc{Hillairet M..} Lack of collision between solid bodies
in a 2{D} incompressible viscous flow. 
\newblock {\em Comm.
Partial Differential Equations,} \textbf{32}, 7-9 (2007) 1345--1371.

\bibitem{HOST} \textsc{Hoffmann K.-H., Starovoitov V.~N..} \newblock On a
motion of a solid body in a viscous fluid. Two dimensional case. \newblock
\emph{Adv. Math. Sci. Appl., } \textbf{9} (1999) 633--648.

\bibitem{Kil} \textsc{Kilpelainen T., Maly J.,} \newblock Sobolev
inequalities on sets with irregular boundaries. 
\newblock {\em Z. Anal.
Anwendungen,} \textbf{19}, 2 (2000) 369--380. The correction of the proof in
"A correction to: Sobolev inequalities on sets with irregular boundaries".

\bibitem{Leoni} \textsc{Leoni G..} A first course in Sobolev spaces. 
\newblock {\em Graduate studies in mathematics, vol. 105, AMS, Providence,
Rhode Island, } 2009.

\bibitem{Lab} \textsc{Labutin D.A.,} \newblock Definitiveness of Sobolev
inequalities for a class of irregular domains. 
\newblock {\em Proc. Steklov
Inst. Math.,} \textbf{232} (2001) 211--215.

\bibitem{MM1} \textsc{Maz'ya V. G.,} \newblock Classes of domains and
embedding theorems for function spaces. \newblock {\em Soviet Math. Dokl.,} 
\textbf{133}, 1 (1960) 882--885.

\bibitem{MP} \textsc{Maz'ya V. G., Poborchi S. V..} Differentiable functions
on bad domains. 
\newblock {\em  World
Scientific Publishing Co., River Edge, NJ,} 1997.

\bibitem{NP1} \textsc{Neustupa J., P. Penel P.. }Existence of a weak
solution to the Navier-Stokes equation with Navier's boundary condition
around striking bodies. \newblock {\em  Comptes Rendus Mathematique,} 
\textbf{347}, 11-12 (2009) 685--690.

\bibitem{NP2} \textsc{Neustupa J., Penel P.. }A weak solvability of the
Navier-Stokes equation with Navier's boundary condition around a ball
striking the wall. In the book: 
\newblock {\em  Advances in Mathematical Fluid Mechanics:
Dedicated to Giovanni Paolo Galdi, Springer-Verlag Berlin,} (2010) 385--408.

\bibitem{PS111} \textsc{Planas G., Sueur F..} On the "viscous incompressible
fluid + rigid body" system with Navier conditions. 
\newblock {\em Annales de
l'I.H.P. Analyse non lin\~{A}\copyright aire,} \textbf{31}(1) (2014) 55--80.

\bibitem{SST} \textsc{San~Martin J.A., Starovoitov V., Tucsnak M.,} Global
weak solutions for the two dimensional motion of several rigid bodies in an
incompressible viscous fluid. \newblock {\em Arch. Rational
Mech. Anal.,} \textbf{161} (2002) 93--112.

\bibitem{STA2} \textsc{Starovoitov V.N.. }Behavior of a rigid body in an
incompressible viscous fluid near boundary. In the book: 
\newblock {\em
International Series of Numerical Mathematics, } \textbf{147} (2003)
313--327.

\bibitem{T} \textsc{Takahashi T..} Analysis of strong solutions for the
equations modeling the motion of a rigid-fluid system in a bounded domain. %
\newblock {\em Advances in Differential Equations,} \textbf{8}(12) (2003)
1499--1532.

\bibitem{Temam} \textsc{Temam R..} Probl\`{e}mes math\'{e}matique en
plasticit\'{e}' . \newblock {\em Gauthier-Villars. Bordas, Paris, } 1983.

\bibitem{RT1} \textsc{Temam R., Strang G..} Functions of bounded
deformation. \newblock {\em Archive for Rational Mechanics and Analysis,} 
\textbf{75}, 1 (1980) 7-21.

\bibitem{Volpert} \textsc{Vol'pert A.I., Hudjaev S.I..} Analysis in classes
of discontinuous functions and equations of mathematical physics.\emph{\ }%
\newblock {\em Martinus Nijhoff Publushers, } 1985.

\bibitem{Volpert2} \textsc{Vol'pert A.I..} The spaces $BV$ and quasilinear
equations.\emph{\ }\newblock {\em Mat. Sb., } \textbf{73}, 115 (1967)
255-302.

\bibitem{W} \textsc{Weck N.,} Local compactness for linear elasticity in
irregular domains. \newblock{\em Math. Meth. Appl.
Sci.}, \textbf{17}, (1994) 107--113.

\bibitem{Zi} \textsc{Ziemer W.. }Weakly differentiable functions. \ %
\newblock {\em Springer, } 1989.

\bibitem{Yu} \textsc{Judakov N.V..} The solvability of the problem of the
motion of a rigid body in a viscous incompressible fluid. (Russian) %
\newblock {\em Dinamika Splo\v sn. Sredy Vyp.,} \textbf{18} (1974) 249--253.
\end{thebibliography}
\end{document}